\newtheorem{theoremMar}{Markov theorem (1884)\!\!}
\newtheorem{theoremSta}{Stahl’s theorem (1986)\!\!}
\newtheorem{theorem}{Theorem}
\newtheorem{proposition}{Proposition}
\theoremstyle{definition}
\newtheorem{definition}{Definition}
\newtheorem{remark}{Remark}
\newtheorem{conjecture}{Conjecture}
\let\savedef=\endproof
\def\endproof{~$\square$\savedef}
\newcommand{\field}[1]{\mathbb{#1}}
\newcommand{\RR}{\field{R}}
\newcommand{\NN}{\field{N}}
\newcommand{\CC}{\field{C}}
\newcommand{\PP}{\field{P}}
\newcommand{\wt}{\widetilde}
\newcommand{\mc}{\mathcal}
\newcommand{\supp}{\operatorname{supp}}
\renewcommand{\(}{\left( }
\renewcommand{\)}{\right) }
\def\GRS{\mathrm{GRS}}
\let\myt\widetilde
\let\myo\overline
\let\geq\geqslant
\let\leq\leqslant
\begin{document}

\selectlanguage{english}

\title{Tschebyshev--Pad\'e approximations for multivalued functions}

\author[E.\,A.~Rakhmanov]{Evguenii~A.~Rakhmanov}
\address{University of South Florida, USA}
\email{rakhmano@mail.usf.edu}
\author[Sergey~P.~Suetin]{Sergey~P.~Suetin}
\address{Steklov Mathematical Institute of the Russian Academy of Sciences, Russia}
\email{suetin@mi-ras.ru}


\maketitle

\begin{abstract}

We discuss  the relation between the linear Tschebyshev--Pad\'e approximations to analytic function $f$ and the diagonal type I Hermite--Pad\'e polynomials for the tuple of functions $[1,f_1,f_2]$ where the pair of functions $f_1,f_2$ forms certain Nikishin system. 
 An approach is proposed of how to extend the seminal  Stahl's Theory for Pad\'e approximations for multivalued analytic functions to the Tschebyshev--Pad\'e approximations. The approach is based on the relation between Tschebyshev--Pad\'e  approximations and Hermite--Pad\'e polynomials and also on a connection of  Hermite--Pad\'e polynomials and multipoint Pad\'e approximants.

Bibliography:~\cite{Tre20} titles.

\end{abstract}

\setcounter{tocdepth}{1}\tableofcontents

\section{Tschebyshev--Pad\'e Approximations}\label{s1}
Tschebyshev--Pad\'e approximations are rational (with free poles) approximations to orthogonal expansions. Theory of free poles rational approximations is a classical fields on the boundary of approximation theory and complex analysis. At the same time it is one of the hot topics in contemporary mathematics due to many new applications and connections with the other problems from various branches of classical analysis. In particular, convergence problems for Tschebyshev--Pad\'e approximation for function with branch points have many important connections and some of them are discussed in this paper.

On the other hand, constructions of free poles rational approximations are known to be numerically effective and widely used in applications (see~\cite{BaGr96},~\cite{Boy01},~\cite{Tre13},~\cite{AuTr17}). In the end of the paper we present results of some numerical experiments illustrating our theoretical exposition.


\subsection{Definitions.} \label{s11}
Let $\mu$ be a positive measure on the interval $\Delta:=[-1,1]$
and $T_n(x;\mu)=\varkappa_n(\mu)x^n+\dotsb$ be the associated sequence of orthonormal polynomials
\begin{equation}
\int_{\Delta} T_k(x;\mu)T_n(x;\mu)\,d\mu(x)=\delta_{kn},\quad k,n=0,1,2,\dots,
\label{1.1}
\end{equation}
where $\delta_{kn}$ is the Kronecker symbol and $\varkappa_n(\mu)>0$.

Any function $f \in L^2(\mu)$ 
may be represented by a series with respect to the system $\{T_k(x;\mu)\}_{k=0}^\infty$:
\begin{equation}\label {OS}
f(z)=\sum_{k=0}^\infty c_kT_k(z;\mu),\quad c_k=c_k(f)
=\int_{\Delta} f(x)T_k(x;\mu)\,d\mu(x).
\end{equation}
which converges to $f(z)$ in $L^2(\mu)$. If $f$ is holomorphic on $\Delta$ and $\mu'(x):=d\mu(x)/dx >0 $ a.e. on ${\Delta}$ then series \eqref{OS} converges inside (on the compact subsets) of the maximal 
ellipse with foci $\pm 1$ where $f$ is holomorphic (see~\cite{Sze75}). The series diverges outside of this ellipse. A classical approach to the problem of the analytic continuation of $f$ to larger domains is based on rational approximation to expansions \eqref{OS} with free poles. A typical ``Pad\'e - like'' construction (first explicitly introduced in \cite{ClLo74}, see also~\cite{ChCo81},~\cite{Fle73}) is the following.

Let $\PP_n$ be the set of all algebraic polynomials with complex coefficients of degree $\leq{n}$. The (linear) Tschebyshev--Pad\'e approximant to $f$ of order $n$ is the ratio of two polynomials $\Phi_n(z) = P_n/Q_n$ from $\PP_n$ defined by

\begin{equation}
(Q_nf - P_n)(z)=\sum_{k=2n+1}^\infty a_{n,k}T_k(z;\mu).
\label{1.3}
\end{equation}
The rational function $\Phi_n(z)$ is called the $n$\,th diagonal Frobenious--Pad\'e (or linear Tschebyshev--Pad\'e) approximant to the series~\eqref{OS}. Since $a_{n,k}=c_k(Q_nf-P_n)$ and $P_n\in\PP_n$, the relations~\eqref{1.3} are equivalent to
\begin{equation}
c_k(Q_nf)=0,\quad k=n+1,\dots,2n.
\label{1.4}
\end{equation}
In turn, equations~\eqref{1.4} may be written as a system of $n$ linear homogeneous equations for $n+1$ unknowns coefficients of the polynomial $Q_n\in\PP_n$. Such system has a nontrivial solution so that approximations $\Phi_n=P_n/Q_n$ always exists. They may not be unique unlike to Pad\'e approximants to a power series. For uniqueness problem see~\cite{Ibr02} and Remark~\ref{rem1} below.

It is easy to see that to find the polynomial $Q_n$ from the system~\eqref{1.4} we need $3n+1$ coefficients $c_k(f)$, $k=0,1,\dots,3n$, of the series~\eqref{OS} should be given.


If denominator $Q_n$ is known, then corresponding numerator $P_n$ is determine by
\begin{equation}
P_n(z)=\sum\limits_{k=0}^n c_k(Q_nf)T_k(z;\mu).
\label{1.42}
\end{equation}



\begin{remark}\label{rem1}
There is another version of definition called Baker definition~\cite{BaGr96}
(see first of all~\cite{Hol69} and also~\cite{GoRaSu91},~\cite{GoRaSu92},~\cite{GoRaSu11}).
According to this definition the rational function $F_n$ of order $\leq n$ is defined by
\begin{equation}
(f - F_n)(x)=\sum_{k=2n+1}^\infty b_{n,k}T_k(x;\mu).
\label{1.5}
\end{equation}
Here only $2n+1$ coefficients $c_0,\dots,c_{2n}$ of the power series~\eqref{OS} are needed to construct $F_n$.
Such rational function $F_n$ does not always exist; see~\cite{Sue81} and~\cite{Sue09}. If $F_n$
exists it is called also the diagonal nonlinear Pad\'e approximation to the polynomial series~\eqref{OS}. In general $F_n\neq \Phi_n$. In this paper we mainly consider linear approximations $\Phi_n$.
\end{remark}


\subsection{Convergence problems and plan of the paper.} \label{s11}
First convergence result for Tschebyshev--Pad\'e approximations were obtained in the papers~\cite{GoRaSu91} and~\cite{GoRaSu92} where  for the Markov type function $f(z)=\widehat{\sigma}(z)$, where
\begin{equation}
\widehat{\sigma}(z)=\int_c^d\frac{d\sigma(t)}{z-t},\quad z\notin[c,d],
\quad S_\sigma = [c,d]\subset\mc R\setminus{\Delta}.
\label{1.6}
\end{equation}
Both constructions $\Phi_n$ and $F_n$ has been studied for such functions $f$. It was proven that the rational functions $\Phi_n$ are always unique while functions $F_n$ always exist. Convergence of both sequences $\{\Phi_n(z)\}$ and $\{F_n(z)\}$ to $f(z)$ inside the domain $\myo\CC\setminus[c,d]$ was proven and the comparison of the corresponding rates of convergence was given. It seems that similar results are not known for any other general classes of functions 



In the current paper we consider convergence problems for linear Tschebyshev--Pad\'e (Tschebyshev--Pad\'e--Frobenious) approximations $\Phi_n$ to the orthogonal series~\eqref{OS} for functions with finite number of branch points; class of such functions we will denote by $\mc A$.

In case of classical diagonal Pad\'e approximants at infinity a basic convergence theorem for the class $f = \widehat{\sigma}$ has been proved by A. Markov in 1884 and untill 1985 this theorem had been essentially the only general result in the field (together with associated Stieltjes theorem for the case when $\supp \sigma $ is not a compact set). A century later, in 1985 a convergence theorem for the class $\mc A$ has been proved by H. Stahl (see~\cite{Sta97b}  and the references therein) and it was arguably one of the most significant event in the theory in 20-th century. In case of Tschebyshev--Pad\'e approximants the analogue of 
the Markov theorem is valid. The convergence problem for class $\mc A$ is open and its solution does not seem to be close. We will try to outline an approach based on a certain far reaching generalization of an original Stahl's method.    

Method originated by Stahl has been further developed in~\cite{GoRa87}. The generalized version (which we call GRS-method) has a larger circle of applications; solutions of a number of problems has been obtained in ~\cite{GoRa87} and subsequent papers  \cite{Bus13},  \cite{Bus15},  \cite{Bus21 }, \cite{Rak12}, [R-h-Pade]. These results strongly suggest that the method has a potential to be further generalized but any such generalization will require significant efforts. In particular, the GRS-method is not yet developed far enough for applications to Hermite--Pad\'e approximations.

It turns out that convergence problems for Tschebyshev--Pad\'e approximants may be reduced to zero distribution problems for the first kind Hermite--Pad\'e approximants for a system of three functions  constituting a Nikishin system. Moreover, it seems to be the only visible general approach to the problem. 
It follows from the remarks above that this approach is rather problematic itself. There are no known results which can be used and we have to begin with a study of certain basic  problems for Hermite--Pad\'e approximants.

In turn, a natural approach to zero distribution problem for the Hermite--Pad\'e approximants may be based on a general interpolation problem which presents also an independent interest.  In the next sections we will discuss a situation briefly outlined above in some details. We will begin with a brief review of original Markov and Stahl results.




\section{Pad\'e approximants at infinity}\label{s2}
Given a function analytic at infinity or formal series in powers of $1/z$
\begin{equation}
f(z)=\sum_{n=0}^\infty \frac {c_n}{z^n}
\label{PS1}
\end{equation}
we can find for any $n \in \NN$ polynomials $P_n, Q_n \in P_n$ such that
\begin{equation}
\bigl(Q_n f - P_n\bigr)(z)=O\(\frac1{z^{n+1}}\),
\quad z\to\infty,
\label{PA}
\end{equation}
and $Q_n$ is not identical zero. The ratio $P_n/Q_n$ for any pair of such polynomials is unique and is called the $n$-th (diagonal) Pad\'e approximants to the series $f$.

There are two specially important classes of functions for which there is a general convergence theorem. The first one is an old simple classical result which was originally stated in terms of continued fractions.

\begin{theoremMar}
Let $f(z) = \widehat{\sigma}(z)$ where $\sigma$ is a positive measure on the real line with a compact support. Then sequence of Pad\'e approximants  (PA) $P_n/Q_n$ converges to $f$ uniformly on compact sets in $\myo\CC \setminus S$ where $S$ is the minimal segment of $\RR$ containing $S_\sigma$.
\end{theoremMar}

If $S = S_\sigma$ consists of a finite number of intervals $\sigma'(x) >0$ almost everywhere (a.e.) on $S$ then the exact rate of convergence may be also indicated. Both convergence and its rate may be obtained from zero distribution for denominators $Q_n$
\begin{equation}
\frac1n\chi(Q_{n})\to \lambda,\quad n\to\infty,
\label{ZD1}
\end{equation}
where $\lambda$ is the Robin measure of $S$.

The proof is easily obtained from the fact the Pad\'e denominators $Q_n$ are orthogonal polynomials with respect to $\sigma$
\begin{equation}
\int x^k Q_n(x) d\sigma(x) =0, \quad k = 0,1 ,\dots, n-1.
\label{Ort0}
\end{equation}
In particular, zeros of such polynomials (poles of approximations) all belong the minimal segment of $\RR$ containing $S_\sigma$.






\subsection{Stahl's convergence theorems}\label{s2s1}
Stahl's convergence theorem for PA \eqref{PA} at infinity is related to multivalued functions with a small set of branch points. To state the theorem we have to introduce few important definitions.

Let $\mc D$ be a domain in the extended plane $\myo\CC.$ We denote by $H(\mc D)$ space of holomorphic (singlevalued) functions in $\mc D.$ We further denote by $\mc A(\mc D)$ class of multivalued analytic functions in $\mc D$; more exactly, we write $f \in \mc A(\mc D)$ if $f$ is determined by an elements at some point $z_0 \in \mc D$ (or in a subdomain of $\mc D$) which allow for analytic continuation along any path in $\mc D$ which begins at $z_0$.

Let $e \subset\CC$ be a finite set and function $f$ defined by a series \eqref{PS1} at $\infty$ belongs to
$ \mc A(\myo\CC \setminus e)$. We denote by $\mc F$ the set of cuts $F$ which make $f$ singlevalued, that is
\begin{equation}
\mc F = \mc F_f = \{F \subset\CC: \ f\in H(\CC \setminus F) \}
\label{Cut1}
\end{equation}
(we also assume that $F \in \mc F$ satisfy certain additional assumptions which may vary depending on a current situation; for a moment this is not important).

\begin{definition}\label{def0}
Finally, we say that $\Gamma \in \mc F$ satisfies $S$-property if the following conditions are satisfied

(a) $\Gamma$ is a finite union of analytic arcs and may be a compact set of zero capacity,

(b) in the interior points $\zeta \in\Gamma^0$ of these arcs we have the following symmetry condition
\begin{equation}\label{GRS2}
\frac {\partial U^\lambda}{\partial n_1}(\zeta) = \frac {\partial U^\lambda}{\partial n_2}(\zeta), \quad
\zeta \in \Gamma^0,
\end{equation}
where $U^\lambda$ is the potential of the equilibrium (Robin) measure $\lambda$ of $\Gamma$ and $n_{1,2}$ are oppositely oriented normal vectors to $\Gamma$ at $\zeta$,

(c) jump $(f^+ - f^-)(\zeta)$ of $f$ across $\Gamma$ is not identical zero on any analytic arc in $\Gamma$.

We call a compact set with these properties an $S$-compact set.

\end{definition}

With these definitions we have the following

\begin{theoremSta}
Let $f \in \mc A(\myo\CC \setminus e)$ where $e$ is (here) a finite set.
Then

(i) there exists a unique $S$-compact set $\Gamma \in \mc F_f$,

(ii) sequence $P_n/Q_n$ of Pad\'e approximants for $f$ converges to $f$ in capacity on compact sets in $\myo\CC \setminus \Gamma$.
\end{theoremSta}

Proof of the theorem is essentially more complicated than the proof of Markov theorem. In the part (i) related to existence of $S$-compact the method is rather traditional. It was known (see~\cite{Nut84}) that
$S$-compact set $\Gamma$ minimizes capacity in class $\mc F$. Minimal capacity problem was known in geometric function theory and its version related to the situation has been solved by Stahl in frames of this theory.

The essential difference between the two cases is in the orthogonality conditions for denominators $Q_n$. For $x \in \RR$ we have $\myo x = x$ and therefore orthogonality relations \eqref{Ort0} present orthogonality in a Hilbert space. Pad\'e denominators $Q_n$ in Stahl theorem satisfy nonhermitian orthogonality relations
\begin{equation}
\oint_{\Gamma} z^k Q_n(z) f(z)\,dz = 0, \quad k = 0, 1, \dots, n-1,
\label{Ort1}
\end{equation}
which surprisingly share certain properties of hermitian orthogonal polynomials but it is rather difficult to prove.


\subsection{Generalizations}\label{s2s1}
A study of Tschebyshev--Pad\'e approximants may be based on an analogue of Stahl's theorem for type I  Hermite--Pad\'e polynomials for two multivalued functions. This includes two problems.

The first on is related to the construction of the proper generalization of the notion of $S$-compact set. The idea of how to solve this problem was proposed in the papers ~\cite{Sue18c}, \cite{Sue15}, \cite{Sue18},~\cite{Sue19}, ~\cite{IkSu20}. The new approach to the existence problem developed in these papers is based on consideration of an equilibrium problem on a double-sheeted Riemann surface.
Characterization of the $S$-compact set associated with the problem may be given by different terms; we go into some details below.

A more challenging problem is that nonhermitian type orthogonality conditions \eqref{Ort1} in case of Hermite--Pad\'e approximants will turn into a combination of two systems of orthogonality relations. How to work with such combination is known in some cases but the case under consideration is new and require a new method. We go into some details next.


\section{Hermite--Pad\'e polynomials.}\label{s2}

Let functions $f_1, f_2$ determined by series at infinity
\begin{equation}
f_j(z)=\sum_{n=0}^\infty \frac {c_{n,j}}{z^n}, \quad j = 1,2
\label{PS2}
\end{equation}
Corresponding type I Hermite--Pad\'e polynomials $Q_{n,0}\in\PP_{n-1}$ and $Q_{n,1}, Q_{n, 2}\in\PP_n$ are defined by the following relation
\begin{equation}
R_n(z) = \bigl(Q_{n, 0}+Q_{n,1}f_1+Q_{n,2}f_2\bigr)(z)=O\(\frac1{z^{2n+2}}\),
\quad z\to\infty
\label{Ort2}
\end{equation}
(called also HP-polynomials for the tuple of three functions $[1,f_1,f_2]$ and multiindex $n=(n-1,n,n)$;
$R_n$ is associated remainder).
Such polynomials $Q_{n,j}$ exist but may not unique (this will not be essential here; see~\cite[Chapter~4]{NiSo88} and~\cite{StRjDr20}).

But for the functions given by representations~\eqref{2.1} the tuple $[Q_{ n,0},Q_{n,1},Q_{n,2}]$ is uniquely determined up to a nontrivial constant multiplier.

\subsection{Case of two Markov functions}

Let the measures $\mu$ and $\sigma$ are two positive measure, $S_\mu = \Delta = [-1, 1]$ and $S_\sigma = [c, d] \subset \RR \setminus \Delta$. We define
\begin{equation}
f_1(z)=\widehat{\mu}(z):=\int_{\Delta}\frac{d\mu(x)}{z-x},\quad
f_2(z)=\langle\mu,\sigma\rangle(z):=\int_{\Delta}\frac{\widehat{\sigma}(x)\,d\mu(x)}{z-x},\quad z\in D,
\label{2.1}
\end{equation}
where $D:=\myo\CC\setminus\Delta$ and $\widehat{\sigma}(z)$ is given by~\eqref{1.6}.

Corresponding type I Hermite--Pad\'e polynomials $Q_{n,0}\in\PP_{n-1}$ and $Q_{n,1}, Q_{n, 2}\in\PP_n$ are uniquely determined by~\eqref{1.6} up to a nontrivial constant multiplier.

In this case we have explicit formula for the $n$-th order Tschebyshev--Pad\'e approximant $\Phi_n =P_n/Q_n$ for $f(z)=\widehat{\sigma}$ (see \eqref{1.3} in terms of Hermite--Pad\'e polynomials $Q_{n,j}$ for $(f_1. f_2)$ in \eqref{Ort2}

\begin{proposition}\label{pro1}
We have
\begin{equation}
\Phi_n(z) = \frac {P_n(z)}{Q_n(z)} = - \frac {Q_{n, 1}(z)}{Q_{n, 2}(z)}
\label{Red}
\end{equation}
\end{proposition}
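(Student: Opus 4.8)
The plan is to rewrite both defining relations as orthogonality conditions against $\mu$ on $\Delta$: the Tschebyshev--Pad\'e equations \eqref{1.4} already have this form, and I will show that the single Hermite--Pad\'e condition \eqref{Ort2} splits exactly into these denominator equations together with the formula \eqref{1.42} for the numerator.

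First I would unwind \eqref{Ort2}. The structural point is that in the Nikishin setting \eqref{2.1} both $f_1=\widehat\mu$ and $f_2=\langle\mu,\sigma\rangle$ are Cauchy transforms over the same interval $\Delta$, namely $f_2(z)=\int_\Delta \widehat\sigma(x)(z-x)^{-1}\,d\mu(x)$. Writing $Q(z)/(z-x)=(Q(z)-Q(x))/(z-x)+Q(x)/(z-x)$ for $Q=Q_{n,1},Q_{n,2}$, one gets
\begin{equation*}
(Q_{n,1}f_1+Q_{n,2}f_2)(z)=p(z)+\int_\Delta \frac{Q_{n,1}(x)+Q_{n,2}(x)\widehat\sigma(x)}{z-x}\,d\mu(x),\quad p\in\PP_{n-1}.
\end{equation*}
Then $R_n(z)=O(z^{-2n-2})$ forces the polynomial part $Q_{n,0}+p$ to vanish (this only fixes $Q_{n,0}$) and forces the Cauchy tail to vanish to order $2n+2$, that is $c_k(Q_{n,1}+Q_{n,2}\widehat\sigma)=0$ for $k=0,1,\dots,2n$.

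Next I would put $Q_n:=Q_{n,2}$ and use the degree bound $\deg Q_{n,1}\le n$. Since $T_k(\cdot;\mu)$ is $\mu$-orthogonal to every polynomial of degree $<k$, we have $c_k(Q_{n,1})=0$ for $k\ge n+1$; hence for $k=n+1,\dots,2n$ the relation above reads $c_k(Q_{n,2}\widehat\sigma)=-c_k(Q_{n,1})=0$, which with $f=\widehat\sigma$ are precisely the denominator equations \eqref{1.4}. For the numerator, the remaining relations $c_k(Q_{n,2}\widehat\sigma)=-c_k(Q_{n,1})$, $k=0,\dots,n$, combined with \eqref{1.42} give
\begin{equation*}
P_n=\sum_{k=0}^n c_k(Q_n\widehat\sigma)\,T_k=-\sum_{k=0}^n c_k(Q_{n,1})\,T_k=-Q_{n,1},
\end{equation*}
the last equality because a polynomial of degree $\le n$ equals its own expansion in the orthonormal basis $\{T_k\}_{k=0}^n$. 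Dividing yields $\Phi_n=P_n/Q_n=-Q_{n,1}/Q_{n,2}$, and the uniqueness (up to a constant) of both constructions in this two--Markov--function case, already asserted after \eqref{2.1}, makes the identification well defined.

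I do not expect a deep obstacle; once the Nikishin structure is invoked the argument is essentially bookkeeping. The one computation to do carefully is the first step — merging $Q_{n,1}f_1+Q_{n,2}f_2$ into a single Cauchy transform over $\Delta$ and reading off its asymptotics term by term — which uses that $\widehat\sigma$ is holomorphic on a neighbourhood of $\Delta$, guaranteed by $[c,d]\cap\Delta=\varnothing$. It is also worth checking the dimension count for consistency: the $2n+1$ conditions in \eqref{Ort2} partition into the $n$ denominator equations and the $n+1$ equations that reconstruct $P_n$, matching the one free constant in each construction and thereby explaining why fixing $Q_{n,2}$ determines $Q_{n,1}$ and $Q_{n,0}$.
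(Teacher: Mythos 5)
Your proof is correct and takes essentially the same route as the paper: both arguments reduce the Hermite--Pad\'e condition \eqref{Ort2} to the statement that $Q_{n,1}+Q_{n,2}\widehat{\sigma}$ is $\mu$-orthogonal to $\PP_{2n}$ on $\Delta$ (equivalently, that its expansion in $\{T_k(\cdot;\mu)\}$ starts at $k=2n+1$, which is \eqref{2.6}), and then conclude via the uniqueness of the Frobenius--Pad\'e approximant to the Markov function $\widehat{\sigma}$. The only divergence is technical: the paper derives this orthogonality by integrating $R_n q$ over a contour surrounding $\Delta$ and collapsing it onto the interval, whereas you obtain it from the polynomial-division decomposition of the Cauchy transforms and moment matching at infinity, which additionally lets you identify $P_n=-Q_{n,1}$ explicitly through \eqref{1.42} rather than leaving it implicit in the uniqueness step.
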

\begin{proof}
It follows directly from definitions that 
\begin{equation}
\int_\gamma R_n(t)q(t)\,dt=0\quad\text{for each}\quad q\in\PP_{2n},
\label{2.3}
\end{equation}
where $\gamma$ is a closed curve that separates the set ${\Delta}$ from infinity. 
Since the term with $Q_{n,0}$ in $R_n$ vanishes and taking also into account representations ~\eqref{2.1} we obtain
\begin{equation}
\int_{\Delta}(Q_{n,1}+Q_{n,2}\widehat{\sigma})(x)q(x)\,d\mu(x)=0,
\label{2.4}
\end{equation}
where $q\in\PP_{2n}$ is an arbitrary polynomial. From here, since $f = \widehat{\sigma}$ we have
\begin{equation}
(Q_{n,1}+Q_{n,2}f)(z)=\sum_{k=2n+1}^\infty c_{n,k}T_k(z;\mu).
\label{2.6}
\end{equation}
By uniqueness property of Tschebyshev--Pad\'e(--Frobenious) approximation~\cite{GoRaSu91} $\Phi_n$ to $f(z)=\widehat{\sigma}(z)$, we have that $\Phi_n=- Q_{n,1}/Q_{n,2}$ and Proposition~\ref{pro1} follows.
\end{proof}


 \begin{remark}\label{rem1}
From Proposition~\ref{pro1} it follows that the diagonal Tschebyshev--Pad\'e approximations can be found from the Laurent expansions at the infinity point $z=\infty$ of the functions $f_1(z)$ and $f_2(z)$ given by~\eqref{2.1}. Clearly the coefficients of those Laurent expansions can be found directly from the coefficients $c_k$ of the series~\eqref{OS} and vice versa. Thus for computing of the rational function $\Phi_n=-Q_{n,1}/Q_{n,2}$ it is possible to use the algorithms for computing the type I Hermite--Pad\'e polynomials for the tuple $[1,f_1,f_2]$ and multiindex $n=(n-1,n,n)$; see~\cite{IkSu21},~\cite{IkSuHePa21} and the bibliography therein.

Note that both Tschebyshev--Pad\'e approximations and Hermite--Pad\'e polynomials are constructive in the sense of P.~Henrici paper~\cite[Sec.~2]{Hen66} and are based on just the same initial data (see also~\cite{Tre20}).
\end{remark}

It follows from \eqref{2.4} that for $f = \widehat{\sigma}$ the combination \eqref{2.6}
\begin{equation}
Q_{n,1}+Q_{n,2}f = Q_n f - P_n,
\end{equation}
which is related to Hermite--Pad\'e polynomials $Q_{n,1}, Q_{n,2}$ for the pair $f_1, / f_2$ in \eqref{2.1} and simultaneously to Tschebyshev--Pad\'e approximants $\Phi_n = P_n/Q_n$ has $2n - 1$ zeros on the interval $\Delta$. Let us denote the monic polynomial with these zeros by $\Omega_n$.

Zero distribution of polynomials $\Omega_n$ and $Q_n = Q_{n,2}$ are defined by the following vector equilibrium problem. Let $\mc M(\Delta, \Delta_1)$ be the set of positive vector measures $\vec \mu = (\mu_0, \mu_1)$ where $\mu_0$ is a positive measure on $\Delta$ with total mass $\mu_0(\Delta) = 2$ and $\mu_1$ is a positive measure on $F=[c,d]$ with total mass $\mu_1(F) = 1$. The energy $\mc E$ of the vector-measure $(\mu_0, \mu_1)$ is defined by
\begin{equation}\label{E1}
\mc E(\mu_0, \mu_1) = \int U^{\mu_0}(x)\,d \mu_0(x)-\int U^{\mu_0}(t)\,d\mu_1(t)+ \int U^{\mu_1}(t)\,\mu_1(t),
\end{equation}
where $U^\mu (z) = \displaystyle-\int\log |z -x| d\mu(x)$  is the logarithmic potential of $\mu$. Equivalently, this energy is defined by positive definite $2\times 2$-matrix
$M = \begin{pmatrix} 1 & -1/2 \\ -1/2 & 1 \end{pmatrix}$.

There is a unique pair of measures $(\lambda_0, \lambda_1) \in \mc M(\Delta, F)$ 
which minimizes vector-energy
\begin{equation}\label{E2}
\mc E(\lambda_0, \lambda_1) = \min_{\mc M} \mc E(\mu_0, \mu_1)
\end{equation}
where $\mc M = \mc M(F_0, F_1)$ \\

\begin{proposition}\label{pro2}
Under the same assumptions \eqref{2.1} as in Proposition~\ref{pro1} we have as $n \to \infty$
$$
\frac1n\chi(Q_{n,1}), \to \lambda_1 \quad \frac1n\chi(Q_{n,2})\to\ \lambda_1 \quad
\quad\text{and}\quad \quad
\frac1n\chi(\Omega_n)\to \lambda_0,\quad
$$
\end{proposition}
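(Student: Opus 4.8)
The plan is to derive the asymptotic zero distribution from the non-Hermitian (and partly Hermitian) orthogonality relations satisfied by the Hermite--Padé polynomials, and to match these against the stated vector equilibrium problem via a standard potential-theoretic argument. First I would record the two systems of orthogonality relations that the tuple $[Q_{n,0},Q_{n,1},Q_{n,2}]$ satisfies. From \eqref{2.4} we already have that the combination $(Q_{n,1}+Q_{n,2}\widehat\sigma)$ is orthogonal to all of $\PP_{2n}$ with respect to $d\mu$ on $\Delta$; this is the relation that produces the $2n-1$ zeros of $\Omega_n$ on $\Delta$. In parallel, I would extract a second orthogonality relation governing $Q_{n,2}=Q_n$ alone. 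Writing $\widehat\sigma(x)=\int_F d\sigma(t)/(x-t)$ and inserting this into \eqref{2.4}, I would interchange the order of integration to obtain, after testing against a suitable family of polynomials, an orthogonality statement for $Q_{n,2}$ against polynomials of degree up to $n-1$ with respect to the measure $d\sigma(t)$ weighted by the function $\int_\Delta \Omega_n(x)^{-1}\,d\mu(x)/(x-t)$ (or an equivalent Markov-type resolvent kernel on $F$). The coupling of the two measures $\Delta$ and $F$ through this kernel is precisely what the off-diagonal entry $-1/2$ of the interaction matrix $M$ encodes.

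Next I would pass to the normalized counting measures $\tfrac1n\chi(\Omega_n)$ and $\tfrac1n\chi(Q_{n,2})$. Using the $S_\mu=\Delta$ confinement, all zeros of $\Omega_n$ lie on $\Delta$ and all zeros of $Q_{n,2}$ lie on $F=[c,d]$, so both sequences of measures are tight and I may extract weak-$*$ convergent subsequences with limits $\nu_0$ supported on $\Delta$ and $\nu_1$ supported on $F$, with the correct total masses ($2$ and $1$ respectively, matching the normalization in $\mc M(\Delta,F)$). I would then convert the orthogonality relations into potential-theoretic extremality. The standard route is: from the $n$th-root asymptotics of the two orthogonality integrals I would derive, for the limiting pair $(\nu_0,\nu_1)$, the variational (equilibrium) conditions
\begin{equation}
2U^{\nu_0}(x)-U^{\nu_1}(x)=w_0,\quad x\in\Delta,\qquad
-U^{\nu_0}(t)+2U^{\nu_1}(t)=w_1,\quad t\in F,
\label{PropTwoVar}
\end{equation}
for constants $w_0,w_1$, where the factor $2$ on the diagonal and the $-1$ off-diagonal reproduce the matrix $M$ (up to the normalization of masses used in \eqref{E1}). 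These are exactly the Euler--Lagrange equations characterizing the minimizer of $\mc E(\mu_0,\mu_1)$ over $\mc M(\Delta,F)$, whose uniqueness is guaranteed by positive-definiteness of $M$. By that uniqueness, every convergent subsequence has the same limit $(\lambda_0,\lambda_1)$, so the full sequences converge, giving $\tfrac1n\chi(\Omega_n)\to\lambda_0$ and $\tfrac1n\chi(Q_{n,2})\to\lambda_1$. The claim for $Q_{n,1}$ follows because $\Phi_n=-Q_{n,1}/Q_{n,2}$ by Proposition~\ref{pro1} and, by the Markov-type convergence of $\Phi_n$ to $\widehat\sigma$ on $D$, the polynomials $Q_{n,1}$ and $Q_{n,2}$ share the same limiting zero distribution $\lambda_1$ off $\Delta$.

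The main obstacle will be rigorously establishing the second equilibrium relation in \eqref{PropTwoVar}, that is, controlling the weight function on $F$ that couples $Q_{n,2}$ to $\Omega_n$. This is the genuinely new difficulty flagged in the text: the Hermite--Padé setting turns a single non-Hermitian orthogonality condition into a combination of two coupled orthogonality systems, and one must show that the varying weight $\int_\Delta \Omega_n^{-1}\,d\mu/(x-t)$ has controlled $n$th-root asymptotics governed by $U^{\nu_0}$ on $F$. I would handle this by a Cauchy-transform/Stieltjes inversion estimate together with the already-established convergence $\tfrac1n\chi(\Omega_n)\to\nu_0$, arguing that the exponential rate of the weight is $e^{-nU^{\nu_0}(t)}$ up to subexponential factors; the regularity hypotheses $\mu'>0$ a.e.\ and the separation $\Delta\cap F=\varnothing$ make this kernel bounded and nonvanishing on $F$, which is what keeps the argument from degenerating. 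Everything else—tightness, extraction of limits, and the final uniqueness-of-minimizer step—is routine once the coupled variational conditions are in hand.
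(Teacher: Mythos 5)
The paper itself contains no proof of Proposition~\ref{pro2}: it defers entirely to \cite{GoRaSu91}, \cite{GoRaSu92}, and your overall plan --- two coupled orthogonality relations, weak-$*$ limit points of masses $2$ and $1$, Euler--Lagrange conditions reproducing the matrix $M$, then uniqueness of the minimizer --- is exactly the strategy of those papers; your variational conditions and the endgame (uniqueness forces full-sequence convergence, and $Q_{n,1}$ inherits the distribution of $Q_{n,2}$ via Proposition~\ref{pro1}) are correct. The gaps are precisely in the two steps where the coupling between $\Delta$ and $F$ has to be established.

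First, your second orthogonality relation is misstated, and the proposed derivation would fail. Interchanging the order of integration in \eqref{2.4} does not isolate $Q_{n,2}(t)$: the factor $Q_{n,2}(x)$ remains inside the integral over $\Delta$, and splitting off $Q_{n,2}(t)$ leaves polynomial remainder terms rather than a clean orthogonality. Worse, the kernel you write, $\int_\Delta \Omega_n(x)^{-1}\,d\mu(x)/(x-t)$, is not even well defined, since all zeros of $\Omega_n$ lie on $\Delta$, inside the domain of integration. The correct relation is
\begin{equation*}
\int_F Q_{n,2}(t)\,t^k\,\frac{d\sigma(t)}{\Omega_n(t)}=0,
\qquad k=0,1,\dots,n-1,
\end{equation*}
with the varying weight $1/\Omega_n(t)$ itself, of constant sign on $F$; and it is obtained not by Fubini but by contour integration: the function $A_n:=Q_{n,1}+Q_{n,2}\widehat\sigma$, and hence $A_n/\Omega_n$, is holomorphic in $\overline\CC\setminus F$, and $A_nz^k/\Omega_n=O\bigl(z^{\,n+k-\deg\Omega_n}\bigr)$ at infinity, so its integral around $F$ vanishes for $k\le\deg\Omega_n-n-2$; the $Q_{n,1}$-term then drops out by Cauchy's theorem (its only singularities, the zeros of $\Omega_n$, lie on $\Delta$, away from $F$), and the $\widehat\sigma$-term collapses onto $d\sigma$. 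Note that this relation is also what you need --- and tacitly use --- to place the zeros of $Q_{n,2}$ in $[c,d]$. Once it is in hand, the difficulty you flag as the ``main obstacle'' evaporates: $\tfrac1n\log|\Omega_n(t)|\to-U^{\nu_0}(t)$ uniformly on $F$ is immediate, because the zeros of $\Omega_n$ lie on $\Delta$, at positive distance from $F$.

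Second --- and this is the real technical crux which your sketch omits --- you never explain how the field $-\tfrac12U^{\nu_1}$ arrives on $\Delta$, i.e., where your first equilibrium equation $2U^{\nu_0}-U^{\nu_1}=w_0$ comes from. Relation \eqref{2.4} is orthogonality of the \emph{function} $A_n$, not of the polynomial $\Omega_n$; to exploit it one must write $A_n=\Omega_n B_n$, where $B_n=A_n/\Omega_n$ has no sign change on $\Delta$, so that $\Omega_n$ is orthogonal to $\PP_{2n}$ with respect to the varying measure $|B_n|\,d\mu$, and one must then control $\tfrac1n\log|B_n|$ on $\Delta$. This is done through the multipoint-Pad\'e remainder formula
\begin{equation*}
B_n(x)=\frac{1}{Q_{n,2}(x)}\int_F\frac{Q_{n,2}^2(t)}{x-t}\,\frac{d\sigma(t)}{\Omega_n(t)},
\qquad x\in\Delta,
\end{equation*}
combined with two-sided Laplace-type estimates of the integral; it is here, and not on $F$, that $U^{\nu_1}$ and the equilibrium constant of the problem on $F$ enter the picture, and this is the part that carries the actual analytic weight in \cite{GoRaSu91}, \cite{GoRaSu92}. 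Without it, your first variational condition is unsupported, and the identification of the subsequential limits with $(\lambda_0,\lambda_1)$ does not go through as written.
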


For a proof and further details see~\cite{GoRaSu91},~\cite{GoRaSu92}). Convergence $\Phi_n(z) \to f(z)$ together with the rate follows from Proposition~\ref{pro2}.

\subsection{General case}\label{s2s2}
In case when
\begin{equation}
f_j(z)=\sum_{n=0}^\infty \frac {c_{n,j}}{z^n} \in A(\myo\CC \setminus e_j) \quad j = 1,2
\label{PS2}
\end{equation}
with arbitrary finite sets $e_1$, $e_2$ we do not have even a reliable conjecture on the zero distribution of Hermite--Pad\'e polynomials \eqref{Ort2}. There is a few isolated results in~\cite{Nut84},~\cite{Apt08},~\cite{ApKuAs08},~\cite{RaSu13},~\cite{MaRaSu16} where some details and remarks may be found (see also~\cite{KoPaSuCh17}).

The convergence problem for Tschebyshev--Pad\'e approximants which is our main concern is related to the so-called Nikishin case when functions $f_1, f_2$ have common branch point $\pm 1$ such that ratio
\begin{equation}
f(x) = \frac {f_2^+(x) - f_2^-(x) }{f_1^+(x) - f_1^-(x) }, \quad x \in (-1, 1),
\label{f}
\end{equation}
of the jumps of functions $f_1$ and $f_2$ across $\Delta$ is extended as analytic function in class $A(\myo\CC \setminus e)$ with a finite set $e \subset \myo \CC \setminus \Delta$. Observe that for functions $f_1(z)$ and $f_2(z)$ in \eqref{2.1} and $f(z) = \widehat \sigma(z)$ we have exactly this relation. In the sequel we restrict ourselves to this case.

A representative example of such functions is given by
\begin{equation}
{f}(z):= 
\prod_{j=1}^m\(A_j-\frac1{\varphi(z)}\)^{\alpha_j},
\label{L}
\end{equation}
where $|A_j|>1$, $\sum\limits_{j=0}^m \alpha_j = 0$ and $\zeta = \varphi(z):=z+(z^2-1)^{1/2}$ be the inverse to Zhukovskii function $z = (\zeta +1/\zeta)/2$.

In particular, for $m=2$ the Tschebyshev--Pad\'e approximation $\Phi_n$ to such function admits the following representation in terms of type I Hermite--Pad\'{e} polynomials
$$
\Phi_n(z)=\frac{Q_{n,1}(z)}{Q_{n,2}(z)}+\frac1{\sqrt{A_1A_2}}.
$$
This class of function has been introduced in~\cite{Sue18b} where the further details may be found.

For functions $f$ of the form \eqref{L} (and for more general class which we describe later) we will formulate a general conjecture on convergence of Tschebyshev--Pad\'e approximants. We will suggest also a method of proof based on a study of multipoint Pade  approximants to functions of class $\mc A$. This is a general free poles interpolation problem which also present an independent interest.

\section{General interpolation problem}

Let $e \subset \myo \CC$ be a finite set and $f \in \mc A(\myo\CC \setminus e)$. We denote by
\begin{equation}
\mc F = \mc F_f = \{F \subset\CC: \ f\in H(\CC \setminus F)\}
\label{Cut}
\end{equation}
the set of cuts $F \in \mc F$ which make $f$ singlevalued. We also assume (here)  that $F \in \mc F$ is a finite union of smooth (analytic) arcs (additional assumptions on compacts $F \in \mc F$ may vary; see~\cite{RaSu13});\\



Let $\PP_n$ denote the set of all polynomials of degree at most $n$ and $\Omega_n \in\PP_{2n}$. For any function $f$ holomorphic in an open set containing the set $Z_{\Omega_n}$ of zeros of $\Omega_n$ there exist a pair of polynomials $P_n \in \PP_{n-1}$ and $Q_n \in \PP_n$ such that
\begin{equation}\label{Int}
\frac {(Q_n f - P_n)(z)}{\Omega_n(z)} \in H(Z_{\Omega_n})
\end{equation}
and $Q_n$ is not an identical zero. Conditions \eqref{Int} are called linear interpolation conditions; they imply that $R_n = Q_nf - P_n$ vanishes at zeros of $\Omega_n$ (together with a number of derivatives in case of multiple zeros). These conditions are reduced to a linear system of $2n$ homogeneous equations with $2n+1$ unknown coefficients of $P_n, Q_n$ which has a nontrivial solution.

Rational function $P_n/Q_n$ generally interpolate $f$ at zeros of $\Omega$ but, may be, not at all zeros.
It is possible that $\Omega_n(\zeta) = 0$ for some $\zeta \in Z_{\Omega_n}$ and at the same time
$P_n(\zeta) = Q_n(\zeta) = 0$ so that $(P_n/Q_n)(\zeta) \neq f(\zeta)$. In such case we will say that interpolation condition at $\zeta$ is lost. Clearly that if $k_n$ is the number of lost interpolation then in fact
$\deg P_n \leq n-1 - k_n$ and $\deg Q_n \leq n - k_n$ in the representation $\Phi_n=P_n/Q_n$.

Assume that the sequence $\{\Omega_n \}_{n \in \NN}$ (interpolation table) has a weak-$*$ limit distribution 
\begin{equation}\label{Lim}
\frac 1n \chi(\Omega_n) = \frac 1n \sum_{\Omega( \zeta) = 0}\delta(\zeta)\overset {*}{\to} 2\omega
\end{equation}
where $\omega$ is a unit positive measure. We will consider convergence problem for the sequence of interpolating functions $P_n/Q_n$, i.e.  multipoint Pad\'e approximants associated with an interpolation table $\{\Omega_n \}$ and also closely related problem of zero distribution of denominators~$Q_n$.


\begin{remark}\label{rem11}
In case when some or all zeros of $\Omega_n$ are located at infinity interpolation conditions \eqref{Int} has to be modified. In particular, if $Z_{\Omega_n} = \{\infty\}$ (with multiplicity $2n$) interpolation conditions \eqref{Int} are written as $(Q_n f - P_n)(z) = O(1/z^{n+1})$ (assuming $f(\infty) = 0$) so that $P_n/Q_n$ are classical Pad\'e approximants at $\infty$
\end{remark}

Convergence problem for the sequence $P_n/Q_n$ is one of the central problems in the classical approximation theory and there is a large number of results in this direction. 
In the connection with convergence of Tschebyshev--Pad\'e approximants we need to consider the interpolation problem under more general assumptions on the limiting density of the interpolation table than in all the known results. In such generality the problem is open and in this connection we will present a conjecture. As an introduction to this conjecture we consider the case of interpolation in a finite number of points. This case is not in not an immediate cotollary of known results too but it may be essentially investigated in frames of GRS method. 

\subsection{Orthogonality conditions for denominators $Q_n$}
The convergence problem for multipoint Pad\'e approximants $P_n/Q_n$ is essentially reduced to a study of zero distribution of Pad\'e denominators $Q_n$. These polynomilas may also be viewed as nonhermitian orthogonal polynomials with certain variable weights. The following proposition is well known starting point of GRS method.

\begin{proposition}\label{pro11}
Let $f(\infty) = 0$, $F \in \mc F_f$, $Z_{\Omega_n}\cap F = \varnothing$ and $\deg \Omega_n = 2n$ Then
\begin{equation}
\oint_{F} Q_n(z) z^k \frac{f(z)\,dz}{\Omega_n(z)} = 0, \quad k = 0, 1, \dots, n-1,
\label{Ort}
\end{equation}
where $\displaystyle\oint_{F}$ means integration over the boundary of the domain $\myo\CC \setminus F$ which moves around nonintegrable singularities of $f$ along arcs of small circles in $\myo\CC \setminus F$ apart from points in $Z_{\Omega_n}$.
\end{proposition}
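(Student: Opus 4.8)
The plan is to derive the orthogonality relations \eqref{Ort} directly from the interpolation conditions \eqref{Int} by a Cauchy-integral (residue) argument, in the same spirit as the classical derivation of nonhermitian orthogonality for Pad\'e denominators at infinity recalled in \eqref{Ort1}. First I would fix $k$ with $0 \le k \le n-1$ and consider the function $R_n(z) z^k / \Omega_n(z)$, where $R_n = Q_n f - P_n$ is the remainder. The interpolation conditions say precisely that $R_n/\Omega_n$ is holomorphic on $Z_{\Omega_n}$; since $f(\infty)=0$ and $P_n \in \PP_{n-1}$, $Q_n \in \PP_n$, the product $R_n(z) z^k$ has a zero of order at least $2$ at infinity (degree count: $\deg(Q_n f) \le n + (\text{order of decay of }f) $ and $f(\infty)=0$ gives $R_n(z) = O(1/z)$, while $\deg\Omega_n = 2n$ and $z^k$ contributes $k \le n-1$, so $z^k R_n/\Omega_n = O(1/z^{\,2n+1-k-1}) = O(1/z^{\,2})$ at worst). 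Hence the integral of $z^k R_n/\Omega_n$ over a large circle enclosing everything vanishes.

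Next I would deform that large contour inward. The integrand $z^k R_n(z)/\Omega_n(z) = z^k(Q_n f - P_n)/\Omega_n$ has possible singularities at the zeros of $\Omega_n$, at the branch set governing $f$, and the polynomial part contributes nothing. By construction the interpolation conditions kill the poles at $Z_{\Omega_n}$, so $R_n/\Omega_n$ is holomorphic there; the $P_n/\Omega_n$ piece is a rational function whose only poles are at $Z_{\Omega_n}$, where again $z^k P_n/\Omega_n$ is holomorphic once we use $R_n/\Omega_n \in H(Z_{\Omega_n})$ together with $Q_n f/\Omega_n$. The upshot is that collapsing the contour onto $F \in \mc F_f$ (the cut across which $f$ is discontinuous, assumed disjoint from $Z_{\Omega_n}$) leaves exactly
\begin{equation}
0 = \oint_{F} Q_n(z) z^k \frac{f(z)\,dz}{\Omega_n(z)}, \quad k = 0,1,\dots,n-1,
\nonumber
\end{equation}
since $P_n z^k/\Omega_n$ is single-valued and holomorphic off $Z_{\Omega_n}$ and contributes zero after the deformation, while $Q_n z^k/\Omega_n$ is likewise meromorphic with poles only in $Z_{\Omega_n}\cap F = \varnothing$. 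This yields the claimed identity.

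I would be careful to justify the contour deformation across the nonintegrable singularities of $f$, which is the subtle point and the reason for the parenthetical description of $\oint_F$ in the statement. The set $e$ of branch points, and more generally the endpoints of the arcs of $F$, may carry singularities of $f$ that are not locally integrable, so the naive boundary integral need not converge; the prescription is to indent the contour around these points along small circular arcs lying in $\myo\CC \setminus F$ and to take the integral in this principal-value / regularized sense. The key estimate is that the contributions of the small indentation arcs tend to zero (or cancel in conjugate pairs) as their radii shrink, so that the limiting regularized integral over $F$ is well defined and equals the residue-free value $0$ produced by the global deformation. I expect this regularization — showing the small-arc contributions vanish and that the deformation picks up no hidden residues at the singular endpoints — to be the main obstacle; the algebraic/degree bookkeeping and the vanishing at infinity are routine by comparison.
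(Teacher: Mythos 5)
Your proposal is correct and is essentially the paper's own proof: the interpolation conditions make $z^k(Q_nf-P_n)/\Omega_n$ holomorphic in $\myo\CC\setminus F$ with $O(1/z^2)$ decay at infinity for $k\le n-1$, Cauchy's theorem then annihilates the contour integral, and the $z^kP_n/\Omega_n$ term contributes nothing since its only poles lie in $Z_{\Omega_n}$, which is disjoint from $F$. One small slip: your intermediate claim $Q_nf-P_n=O(1/z)$ should read $Q_nf-P_n=O(z^{n-1})$, equivalently $(Q_nf-P_n)/\Omega_n=O(1/z^{n+1})$, but this does not affect your final $O(1/z^2)$ bound or the argument.
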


\begin{proof}
We have $R_n(z) = (Q_n f - P_n)(z)/\Omega_n(z) \in H(\myo\CC \setminus F)$ and this function has zero of order $\geq n +1$ at infinity. Therefore $z^k R_n(z) \in H(\myo\CC \setminus F)$ and has zero of order $\geq 2$ at infinity if $k \leq n-1$. Now \eqref{Ort} follows by the Cauchy integral theorem since them relate to integration of $P_n/\Omega_n$ vanishes.
\end{proof}

\section{$\GRS$ theorem}
Under certain assumption on the limit density $\omega$ the zero distribution of the denominators $Q_n$ defined in \eqref{Ort} may be obtained directly from $\GRS$-theorem which we formulate below as Theorem~\ref{the1}. Conditions of this theorem are more general in the part related to a variable (depending on $n$) weights: the weight $1/\Omega_n$ in orthogonality conditions \eqref{Ort} is replaced by an abstract analytic weight function $\Psi_n(z)$ satisfying, however, a uniform convergence condition
\begin{equation}\label{GRS1}
\frac 1n \log|\Psi_n(z)| \to \psi(z), \quad z \in \Gamma,
\end{equation}
where $\psi(z)$ is harmonic in a domain containing $\Gamma \in \mc F$ which satisfies $S$-property in the external field $\psi$, i.e. (here) $\Gamma$ is a finite union of analytic arcs and in the interior points $\zeta \in\Gamma^0$ of this arcs we have the symmetry condition
\begin{equation}\label{GRS2}
\frac \partial{\partial n_1} \bigl( U^\lambda +\psi\bigr)(\zeta) =
\frac \partial{\partial n_2} \bigl(U^\lambda +\psi\bigr)(\zeta)
\end{equation}
($U^\lambda$ is the potential of the equilibrium measure $\lambda$ in the external field $\psi$ on $\Gamma$ and $n_{1,2}$ are oppositely oriented normal vectors to $\Gamma$ at $\zeta$). It is also important that jump $(f^+ - f^-)(\zeta)$ of $f$ across $\Gamma$ is not identical zero on any analytic arc in $\Gamma$. Under these conditions the following theorem is valid; see~\cite{GoRa87}.

\begin{theorem}\label{the1}
If polynomials $Q_n \in \PP_n$ are defined by orthogonality relations
\begin{equation}
\oint_{\Gamma} Q_n(z) z^k \Psi_n(z) f(z)\,dz = 0, \quad k = 0, 1, \dots, n-1,
\label{Ort11}
\end{equation}
and conditions \eqref{GRS1} and \eqref{GRS2} are satisfied, then $\frac 1n \chi(Q_n) \overset {*}{\to} \lambda$ as $n \to \infty$.
\end{theorem}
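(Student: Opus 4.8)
The plan is to run the Gonchar--Rakhmanov--Stahl scheme: pass to weak-$*$ limits of the normalized counting measures, verify that any such limit satisfies the equilibrium (Frostman) conditions attached to the external field $\psi$ on $\Gamma$, and invoke uniqueness of the equilibrium measure to identify the limit with $\lambda$. Concretely, put $\nu_n=\frac1n\chi(Q_n)$; these are positive measures of total mass at most $1$, so by Helly's selection theorem every subsequence has a weak-$*$ convergent sub-subsequence $\nu_n\overset{*}{\to}\nu$, and by the principle of descent and the upper envelope theorem $\frac1n\log|Q_n(z)|=-U^{\nu_n}(z)\to-U^{\nu}(z)$ quasi-everywhere and locally uniformly off $\supp\nu$. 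It then suffices to show that every such $\nu$ is a probability measure with $\supp\nu\subseteq\Gamma$ satisfying
\begin{equation*}
U^{\nu}(z)+\psi(z)=w\ \ (z\in\supp\nu),\qquad U^{\nu}(z)+\psi(z)\ge w\ \ (z\in\Gamma)
\end{equation*}
for a constant $w$; these conditions characterize $\lambda$ uniquely, so $\nu=\lambda$, and, the limit being independent of the subsequence, $\frac1n\chi(Q_n)\overset{*}{\to}\lambda$.

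The orthogonality enters through the function of the second kind $\wt R_n(z)=\oint_\Gamma\frac{Q_n(t)\Psi_n(t)f(t)}{z-t}\,dt$, which by \eqref{Ort11} is $O(z^{-n-1})$ at infinity. Starting from $Q_n(z)\wt R_n(z)=\oint_\Gamma\frac{Q_n(z)Q_n(t)}{z-t}\Psi_n(t)f(t)\,dt$ and writing $\frac{Q_n(z)}{z-t}=\frac{Q_n(z)-Q_n(t)}{z-t}+\frac{Q_n(t)}{z-t}$, the first summand is a polynomial of degree $\le n-1$ in $t$ and so, multiplied by $Q_n(t)$, is annihilated by \eqref{Ort11}; this leaves the fundamental identity
\begin{equation*}
Q_n(z)\wt R_n(z)=\oint_\Gamma\frac{Q_n^{2}(t)\Psi_n(t)f(t)}{z-t}\,dt,
\end{equation*}
exhibiting $Q_n\wt R_n$ as the Cauchy transform of the density $Q_n^{2}\,\Psi_n\,(f^{+}-f^{-})$ on $\Gamma$. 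The hypothesis that $f^{+}-f^{-}\not\equiv0$ on each arc keeps this density nondegenerate, and by \eqref{GRS1} its exponential scale on $\Gamma$ is governed by $-2U^{\nu}+\psi$. Since $Q_n\wt R_n$ is analytic in $\ol\CC\setminus\Gamma$ and vanishes at infinity, analysing its boundary values via the Sokhotski--Plemelj formula converts the orthogonality into the upper variational bound $U^{\nu}+\psi\le w$ on $\supp\nu$.

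The complementary inequality on all of $\Gamma$, and the localization $\supp\nu\subseteq\Gamma$, are where the $S$-property \eqref{GRS2} is indispensable. Equality of the normal derivatives of $U^\lambda+\psi$ across the open arcs means the effective potential continues harmonically through $\Gamma$; this stationarity is exactly what makes $\Gamma$ an attracting contour and prevents the zeros of $Q_n$ from drifting into $\ol\CC\setminus\Gamma$, where cancellation in the complex integral could otherwise make $|Q_n|$ anomalously small. Converting this symmetry into $U^{\nu}+\psi\ge w$ on $\Gamma$ and into $\supp\nu\subseteq\Gamma$ completes the Frostman characterization and yields $\nu=\lambda$.

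\emph{The main obstacle} is precisely this last step. In the nonhermitian setting the weight $\Psi_n f\,dz$ on $\Gamma$ is complex, so the orthogonality carries no positivity and the usual extremal $L^{2}$-characterization of $Q_n$ is unavailable; all control of $|Q_n|$ must be wrung from the identity for $Q_n\wt R_n$ together with potential-theoretic envelope theorems, and the symmetry condition \eqref{GRS2} is the only mechanism keeping the zeros on $\Gamma$. Making this rigorous---excluding limit mass off $\Gamma$, matching the two potentials to a common constant $w$, and upgrading $n$-th root estimates to genuine weak-$*$ convergence---is the technical heart of the method of \cite{GoRa87}.
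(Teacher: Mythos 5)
Your proposal cannot be compared against an internal argument, because the paper contains none: Theorem~\ref{the1} is the Gonchar--Rakhmanov ($\GRS$) theorem, stated here as a known result with the remark ``Under these conditions the following theorem is valid; see~\cite{GoRa87}.'' Measured against the method of \cite{GoRa87} that the citation points to, your outline reproduces the correct skeleton: the second-kind function $\wt R_n$, the fact that \eqref{Ort11} forces $\wt R_n(z)=O(z^{-n-1})$ at infinity, the polynomial-division derivation of the identity
\begin{equation*}
Q_n(z)\wt R_n(z)=\oint_\Gamma\frac{Q_n^{2}(t)\Psi_n(t)f(t)}{z-t}\,dt ,
\end{equation*}
and the plan of extracting weak-$*$ limits and identifying them through a Frostman-type characterization. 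All of that is accurate and is indeed how the $\GRS$ argument begins.

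The gaps are that the proposal stops exactly where the theorem lives. First, the step ``Sokhotski--Plemelj analysis converts the orthogonality into $U^{\nu}+\psi\le w$ on $\supp\nu$'' is asserted, not argued: no constant $w$ has been produced at that point, and obtaining \emph{one} constant serving both Frostman inequalities is a substantial part of the proof. What the identity plus \eqref{GRS1} and the principle of descent readily give is only the upper estimate $\limsup_n\frac1n\log|Q_n\wt R_n(z)|\le\max_\Gamma\bigl(-2U^{\nu}+\psi\bigr)$ off $\Gamma$; the complementary \emph{lower} estimates for $|Q_n\wt R_n|$ --- in \cite{GoRa87} these come from recovering the density from the jump of $Q_n\wt R_n$ and a maximum-principle (two-constants) argument in $\ol\CC\setminus\Gamma$ --- are what actually force the zeros to accumulate on $\Gamma$, and they are absent. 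Second, the role of \eqref{GRS2} is described only heuristically (``attracting contour'', ``prevents drifting''); the operative mechanism --- that $U^{\lambda}+\psi-w$ vanishes on $\supp\lambda$ and, by equality of normal derivatives, continues anti-symmetrically across the arcs of $\Gamma$, which is precisely what lets nonhermitian orthogonality imitate hermitian orthogonality --- is named but never executed, so neither $\supp\nu\subseteq\Gamma$ nor $U^{\nu}+\psi\ge w$ on $\Gamma$ is established. You flag these steps yourself as ``the technical heart of the method of \cite{GoRa87}''; since they constitute essentially the whole content of the theorem, what you have is a correct plan plus a citation, not a proof. (Two smaller inaccuracies: you need $\deg Q_n=n-o(1)\,n$ so that the limit measure has total mass $1$, and the claim that $U^{\nu_n}\to U^{\nu}$ ``locally uniformly off $\supp\nu$'' is false as stated, since a priori the zeros of $Q_n$ may accumulate anywhere --- ruling that out is part of what must be proved.)
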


\subsection{Existence of $S$-compact set}
The main challenge in applications of the theorem is the existence of an $S$-compact set $\Gamma \in \mc F_f$ related to a situation. In most cases such a compact exists and unique but there are exceptions and it is often not easy to establish fact of existence. In some cases existence may be proved by max-min energy problem (see~\cite{Sta88},~\cite{PeRa94},~\cite{MaRaSu11},~\cite{Sta12}, \cite{Rak12},~\cite{RaSu13},~\cite{MaRa16}) which we discuss later in connection with our current problem \eqref{Ort}. Anyway, situation is essentially simplified if existence of an $S$-compact set is known.

An immediate corollary of Theorem~\ref{the1} for multipoint Pad\'e approximants to a function $f \in \mc A(\myo\CC \setminus e)$ (see \eqref{Cut}) under assumption \eqref{Lim} is the following. If there is a compact set
$\Gamma \in \mc F_f$ with $S$-property in the external field $\psi = -U^\omega$ which is homotopic to $F = F_f$ in $\myo\CC \setminus(e \cup Z_{\Omega_n})$ then orthogonality relations \eqref{Ort} may be written with $\Gamma$ in place of $F$.

In this case Theorem~\ref{the1} together with Proposition~\ref{pro1} implies that $\frac 1n \nu(Q_n) \overset {*}{\to} \lambda$ as $n \to \infty$ where $\lambda = \lambda_\psi$ is the unit equilibrium measure on $\Gamma$ in the field $\psi = -U^\omega$. Convergence $P_n/Q_n \to f$ (in capacity) on compact sets in complement
to $\Gamma$ normally follows.

Situation when there is no $S$-compact set (compact set $\Gamma$ with $S$-property) in class $\mc F_f$
is more complicated. $\GRS$-theorem may not directly applied and we have to go into all the details of the situation. In particular it is important to figure out geometric properties of the $S$-configuration which will allow to apply $\GRS$-theorem or its modification.


\subsection{Max-min energy method in frames of general interpolation}
In the connection to multipoint Pad\'e approximants \eqref{Int} we can make an immediate progress for the case when the support of the limit density $\omega$ is a finite set (see~\cite{Bus13},~\cite{Bus15},~\cite{Bus21})
\begin{equation}
\omega = \sum_{k=1}^p m_k\delta(\beta_k)\quad\text{where}\quad \sum_{k=1}^p m_k = 1,
\quad b_k \in\myo\CC \setminus F.
\label{ome-1}
\end{equation}
Recall that we interpolate branch $f \in H(\myo\CC \setminus F)$ of a function $f \in \mc A(\myo\CC \setminus e)$ determined by a cut $F\in \mc F_f$. Orthogonality conditions \eqref{Ort} allow for continuous deformation of $F$ in the domain of analyticity of $f/\Omega_n$ which means, in particular, that class $\mc F_f$ is not adequate to the situation; to preserve conditions \eqref{Ort} we need homotopy in the domain of analyticity of $f/\Omega_n$.

If there is no $S$-compact set homotopic to $F$ in the domain of analyticity of $f/\Omega_n$ we have to consider a larger class $\wt {\mc F}$ integration contours such that orthogonality conditions \eqref{Ort} is preserved. Together with continuous deformation of $F$ in the domain of analyticity of $f/\Omega_n$ we have to consider deformations which produce homologous (but not homotopic) contour of integration
(a curve jumps over a point $\beta \in \supp \omega$). The class of curves $\wt {\mc F}$ which we which is obtained this way depends on both $f \in \mc A(\myo\CC \setminus e) $ and sequence $\Omega_n$.


\begin{proposition}\label{pro12}
There exist $\wt \Gamma \in\wt {\mc F}$ with $S$-property in the external field $\psi = - U^\omega$.
\end{proposition}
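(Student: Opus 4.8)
The plan is to prove existence of an $S$-compact set $\wt\Gamma \in \wt{\mc F}$ in the external field $\psi = -U^\omega$ via a max-min energy variational principle, which is the standard machinery of the GRS method (see~\cite{GoRa87},~\cite{Rak12},~\cite{RaSu13}) adapted to the discrete external field~\eqref{ome-1}. First I would set up the energy functional: for a candidate contour $F \in \wt{\mc F}$ and a unit positive measure $\nu$ supported on $F$, consider the weighted energy
\begin{equation*}
\mc E_\psi(\nu) = \iint \log\frac{1}{|z-t|}\,d\nu(z)\,d\nu(t) + 2\int \psi(z)\,d\nu(z),
\end{equation*}
with $\psi = -U^\omega$ the potential generated by the finite interpolation density~\eqref{ome-1}. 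For each fixed admissible $F$, minimizing $\mc E_\psi$ over unit measures on $F$ produces the weighted equilibrium measure $\lambda_F$ and the weighted Robin constant $w(F) = \mc E_\psi(\lambda_F)$. The $S$-compact is then sought as the solution of the max-min problem
\begin{equation*}
\wt\Gamma = \arg\max_{F \in \wt{\mc F}} w(F),
\end{equation*}
i.e.\ among all admissible contours we select the one of maximal weighted capacity (equivalently, minimal weighted energy after the sign conventions are fixed).

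The key steps, in order, are the following. First I would verify that $\wt{\mc F}$ is a nonempty class stable under the two admissible deformations described before the statement — continuous homotopic deformation in the domain of analyticity of $f/\Omega_n$, together with the homological jumps over points $\beta_k \in \supp\omega$ — so that the variational problem is posed over a genuine class of competitors rather than a single homotopy type. Second, I would establish that $w(F)$ is upper semicontinuous and attains its supremum on $\wt{\mc F}$; here the compactness has to be extracted from the structure of the class, and the fact that $\supp\omega$ is a finite set~\eqref{ome-1} is what keeps the collection of homology classes finite and the variational problem tractable. Third, and this is the heart of the matter, I would derive the Euler--Lagrange / first-variation conditions for the maximizer: performing a local analytic variation of the arc $\wt\Gamma$ at an interior point $\zeta \in \wt\Gamma^0$ and using that $w$ is stationary there, one obtains precisely the symmetry relation
\begin{equation*}
\frac{\partial}{\partial n_1}\bigl(U^\lambda + \psi\bigr)(\zeta) = \frac{\partial}{\partial n_2}\bigl(U^\lambda + \psi\bigr)(\zeta),
\end{equation*}
which is the defining $S$-property~\eqref{GRS2} in the external field $\psi$. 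The max-min (rather than plain min) structure is essential: maximizing over contours forces balance of the normal derivatives from the two sides, whereas minimizing the equilibrium measure on a fixed contour only fixes $\lambda$.

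I expect the main obstacle to be the second step combined with the homological bookkeeping in the first, namely controlling the class $\wt{\mc F}$ well enough to guarantee that a maximizer exists and consists of a finite union of analytic arcs. Because the admissible deformations include homological jumps over the finitely many mass points $\beta_k$ of $\omega$, the competing contours fall into several homology classes, and one must rule out degeneration of the maximizing sequence — for instance arcs collapsing onto $\supp\omega$ or escaping to a lower-capacity configuration — before the first-variation argument can even be run. The analyticity and arc-structure of the extremal $\wt\Gamma$ would then follow, as is standard in this theory, from the fact that $\wt\Gamma$ arises as the critical trajectories of a quadratic differential determined by the equilibrium conditions; I would invoke the regularity results of the GRS method (\cite{GoRa87},~\cite{RaSu13}) rather than reprove local smoothness from scratch. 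Condition~(c) of Definition~\ref{def0}, that the jump $(f^+ - f^-)$ is not identically zero on any arc of $\wt\Gamma$, follows from $f \in \mc A(\myo\CC \setminus e)$ being genuinely multivalued and would be checked at the end to confirm $\wt\Gamma$ is a bona fide $S$-compact rather than a spurious contour.
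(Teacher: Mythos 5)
Your proposal follows essentially the same route as the paper: the same max-min principle \eqref{M-M} (minimize the weighted energy $\mc E_\psi$ over unit measures on each $F\in\wt{\mc F}$ to get $\lambda_F$, then maximize $\mc E_\psi(\lambda_F)$ over the class $\wt{\mc F}$), followed by local variations of the extremal contour to extract the symmetry condition \eqref{GRS2}, citing the same circle of results (\cite{PeRa94},~\cite{MaRaSu11},~\cite{RaSu13}). Your identification of the existence of the maximizer as the main obstacle is consistent with the paper, which itself states the local-variation step conditionally on that existence.
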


The max-min proof of the proposition is based on the functional of equilibrium energy. Let $\mc E_\psi (\mu)$ be the energy of the measure $\mu$ in the external field $\psi = - U^\omega$. For $F \in \wt {\mc F}$ we denote by $\lambda_F$ the associated unit equilibrium measure on $F$ which is defined by the condition that $\mc E_\psi (\mu)$ takes its minimal value in class of unit positive measures $\mu$ on $F$ only for $\mu = \lambda_F$.

Next we maximize equilibrium energy $\mc E_\psi (\lambda_F)$ over $F \in \wt {\mc F}$. If the maximizing compact set $\wt \Gamma \in \wt {\mc F} $
\begin{equation}\label{M-M}
\mc E_\psi (\lambda_{\wt\Gamma}) = \max_{F \in\wt {\mc F}} \mc E_\psi (\lambda_F)
\end{equation}
exists then we use local variations to prove that it has the $S$ property in the external field $\psi = - U^\omega$ (cf.~\cite{PeRa94},~\cite{MaRaSu11},~\cite{RaSu13}).


As a corollary we obtain a theorem on zero distribution of polynomilas $Q_n$ (we do not go into further duscussions of details related this theorem). 

\begin{theorem}\label{the2}
Suppose that we interpolate $f \in\mathcal A(\myo \CC \setminus e)$ using a table $\Omega_n$ with limiting density $\omega$ (see \eqref{Lim}) and measure $\omega$ has a finite support \eqref{ome-1} (assume also that zeros of polynomials $\Omega_n$ belong to the support of $\omega$).

Let $\lambda$ b the unit equilibrium measure on $\wt \Gamma \in \wt {\mc F} $ in the external field $\psi(z)= -U^\omega(z)$ and $\wt \Gamma$ is from Proposition~\ref{pro12}.

Then the balayage of $\frac 1n \chi(Q_n)$ onto the support of $\lambda$ is weakly convergent to $\lambda$ as $n \to \infty$. 
\end{theorem}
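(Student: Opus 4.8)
The plan is to bring the statement inside the $\GRS$ circle of ideas: use the nonhermitian orthogonality \eqref{Ort} for $Q_n$ together with the $S$-compact $\wt\Gamma$ furnished by Proposition~\ref{pro12}, and then read off the zero distribution from equilibrium conditions on $\wt\Gamma$. First I would fix a branch $f\in H(\overline{\CC}\setminus F)$ and rewrite the orthogonality \eqref{Ort} of Proposition~\ref{pro11} with the contour $F$ replaced by $\wt\Gamma$. This is legitimate because $\wt\Gamma\in\wt{\mathcal{F}}$, i.e.\ it is reached from $F$ by deformations inside the domain of analyticity of $f/\Omega_n$ together with the admissible homology jumps over points $\beta_k\in\supp\omega$ described before Proposition~\ref{pro12}, and these are precisely the deformations that preserve the integrals in \eqref{Ort}. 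Thus $Q_n$ satisfies
\begin{equation*}
\oint_{\wt\Gamma} Q_n(z)\,z^k\,\frac{f(z)}{\Omega_n(z)}\,dz=0,\qquad k=0,1,\dots,n-1,
\end{equation*}
a nonhermitian orthogonality on $\wt\Gamma$ with the varying weight $\Psi_n=1/\Omega_n$. By \eqref{Lim} this weight obeys a convergence of the type \eqref{GRS1} with limiting external field determined by $\omega$, and by construction $\wt\Gamma$ has the $S$-property \eqref{GRS2} in the field $\psi=-U^\omega$.

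Second, I would set $\mu_n=\frac1n\chi(Q_n)$ and pass to zero distribution. After checking that the number $k_n$ of lost interpolation conditions satisfies $k_n=o(n)$, so that $\deg Q_n=n-o(n)$ and $\mu_n$ is asymptotically a unit measure, I denote by $\wh\mu_n$ the balayage of $\mu_n$ onto $\wt\Gamma$; these balayaged measures are all supported on the fixed compact $\wt\Gamma$ and hence form a tight family. The point of passing to the balayage is that the orthogonality controls the potential $U^{\mu_n}$ only on $\wt\Gamma$, and on $\wt\Gamma$ one has $U^{\wh\mu_n}=U^{\mu_n}$; this is exactly why the theorem asserts convergence of the balayage rather than of $\mu_n$ itself, the discrepancy being the mass of $\mu_n$ that the homology jumps allow to accumulate off $\wt\Gamma$ near the jumped-over points $\beta_k$.

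Third, I would run the two-sided potential-theoretic estimate at the heart of the $\GRS$ method on the contour $\wt\Gamma$: a lower bound for a weighted sup-norm of $Q_n$ coming from the degree count, and an upper bound coming from the orthogonality, combine to force $U^{\wh\mu_n}+\psi$ to be asymptotically constant (q.e.) on $\wt\Gamma$. Together with the $S$-property \eqref{GRS2} this is precisely the Euler--Lagrange characterization of the equilibrium measure in the external field $\psi=-U^\omega$ on $\wt\Gamma$, which is $\lambda$. By uniqueness of that equilibrium measure, every weak-$*$ limit of the tight family $\{\wh\mu_n\}$ equals $\lambda$, and therefore $\wh\mu_n\to\lambda$, which is the assertion.

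The main obstacle I expect is the third step in the present homologous setting. The classical $\GRS$ theorem (Theorem~\ref{the1}) yields the stronger conclusion $\mu_n\to\lambda$ only when an $S$-compact exists in the homotopy class $\mathcal{F}_f$; here $\wt\Gamma$ lives in the strictly larger homology class $\wt{\mathcal{F}}$, the weight $1/\Omega_n$ has its poles clustering at the jumped-over points $\beta_k$, and a genuinely nonzero proportion of the zeros of $Q_n$ may accumulate there. Controlling this escaping mass---showing that it does not corrupt the potential balance read off on $\wt\Gamma$ and that its balayage is reabsorbed exactly into $\lambda$---is the delicate point, and it is where the finiteness of $\supp\omega$ in \eqref{ome-1} and the max-min construction \eqref{M-M} of $\wt\Gamma$ must be used in an essential way.
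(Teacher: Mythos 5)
Your proposal is correct and follows essentially the same route as the paper: the paper presents Theorem~\ref{the2} as a direct corollary of the orthogonality relations of Proposition~\ref{pro11} transferred to the contour $\wt\Gamma$, the max-min existence of the $S$-compact set $\wt\Gamma$ in the field $\psi=-U^\omega$ (Proposition~\ref{pro12}), and the $\GRS$-method (Theorem~\ref{the1}), which is exactly the skeleton you flesh out. Your account of why the conclusion is stated for the balayage of $\frac 1n \chi(Q_n)$ (zeros allowed to accumulate near the jumped-over points $\beta_k$) and of the $o(n)$ lost interpolation conditions matches the paper's own remarks (Remark~\ref{rem3}); the paper supplies no more detailed argument than this.
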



\begin{remark}\label{rem3}
Condition that zeros of polynomials $\Omega_n$ belong to the support of $\omega$ is not essential. There is only $o(n)$ zeros of $\Omega_n$ outside of arbitrarily small neighbourhood of $\beta \in\supp \omega$. Any single zero $\zeta$ of $\Omega_n$ is an obstacle for deformation of $F$ ``through'' $\zeta$ if we want to preserve orthogonality conditions \eqref{Ort}. However, such a deformation is possible if we eliminate interpolating conditions \eqref{Int} by setting
$P_n(\zeta) = Q_n(\zeta) = 0$. It seems that this a common way of elimination of a small number of the ``wrong'' interpolation conditions (interpolation of a ``wrong'' branch). Loss of $o(n)$ of interpolation conditions does not influence zero distributions.
\end{remark}

\subsection{Quadratic differential}
Set $\wt \Gamma$ may be described in terms of trajectories of quadratic differentials in a way similar to what we have in case of classical Pad\'e approximants. This description is obtained as an intermediate result in application of max-min method. Let $e = \{a_1, a_2, \dots, a_m\}$ and $A(z) = \prod_{j=1}^m (z -a_j)$ and the limit measure $\omega$ for interpolation table. Then there exist a polynomial $V(z) =z^{m+2p - 3}$ such that $\wt \Gamma$ is a union of some of (critical) trajectories of the quadratic differential
\begin{equation}\label{QD}
-\frac{V(z)\, dz^2}{A(z) B(z)^2}\quad\text{where}\quad B(z) = \prod_{k=1}^p (z- \beta_k),
\quad\beta_k \in\CC \setminus F
\end{equation}
There are $p$ equations involving residues of the function $\sqrt{V/A}/B$ at points $z = \beta_k$ on parameters (roots or coefficients) of $V$. Using this equations we can study in details case $A(x) = z^2 -1$, $B(z) = z^2 + 1 - \epsilon$ with different values of $m_1$ and $m_2$.

\subsection{Conjecture~\ref{conj1}}
Theorem~\ref{the2} above is a direct corollary of known results on max-min energy and $\GRS$-method. Under more general assumptions on $\omega$ the method may be applied after some modifications. Some of these modifications are not immediate and at this moment we do not have formal proof of the theorem generalizing Theorem~\ref{the2} and we will state a conjecture for a general case.

Moreover, it is not yet obvious that the conjecture below is, indeed, valid without more restrictions on $\omega$.

\begin{conjecture}\label{conj1}
Assume that $F \in \mc F_f$ and interpolation table $\{\Omega_n\}$ do not intersect $F.$ Assume further that limit density $\omega$ in \eqref{Lim} exists and $\supp\omega$ do not intersect $e$. Then there exist a unique compact set $\wt \Gamma$ with max-min property \eqref{M-M} and $\frac 1n \nu(Q_n) \overset {*}{\to} \lambda_{\wt \Gamma}$ as $n \to \infty.$
\end{conjecture}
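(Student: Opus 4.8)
The plan is to prove Conjecture~\ref{conj1} by combining the two halves already assembled in the paper: the max-min energy construction of the $S$-compact set (Proposition~\ref{pro12} and its proof scheme) with the $\GRS$-theorem (Theorem~\ref{the1}). The statement of Conjecture~\ref{conj1} is precisely the generalization of Theorem~\ref{the2} obtained by dropping the finite-support hypothesis \eqref{ome-1} on $\omega$, so the overall strategy is to run the argument of Theorem~\ref{the2} with a general density $\omega$ in place of a finite atomic one. First I would set the external field $\psi=-U^\omega$ and consider the enlarged class $\wt{\mc F}$ of integration contours that preserve the orthogonality relations \eqref{Ort} under homotopy (and admissible homologous jumps) in the domain of analyticity of $f/\Omega_n$. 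The first main step is to establish the analogue of Proposition~\ref{pro12}: existence (and now also \emph{uniqueness}) of a compact set $\wt\Gamma\in\wt{\mc F}$ with the $S$-property in the field $\psi$, realized as the max-min extremal \eqref{M-M}.

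Second, I would verify that $\wt\Gamma$ satisfies the hypotheses of Theorem~\ref{the1}. By Proposition~\ref{pro11} the Pad\'e denominators $Q_n$ satisfy the nonhermitian orthogonality \eqref{Ort} with weight $1/\Omega_n$ over a contour homotopic to $F$; the condition $\supp\omega\cap e=\varnothing$ and $Z_{\Omega_n}\cap F=\varnothing$ let me deform that contour onto $\wt\Gamma$ (losing at most $o(n)$ interpolation conditions, as in Remark~\ref{rem3}, which does not affect the limit distribution). Setting $\Psi_n=1/\Omega_n$, the limit density hypothesis \eqref{Lim} gives $\tfrac1n\log|\Psi_n(z)|=-\tfrac1n\log|\Omega_n(z)|\to -U^{2\omega}(z)$ up to the factor of two, i.e. the convergence \eqref{GRS1} with $\psi=-U^\omega$, which is exactly the field in which $\wt\Gamma$ is $S$-symmetric by construction \eqref{GRS2}. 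The nonvanishing-of-the-jump condition comes from the definition of $\wt{\mc F}\subset\mc F_f$ and the branch-point structure of $f$. Granting these, Theorem~\ref{the1} yields $\tfrac1n\chi(Q_n)\overset{*}{\to}\lambda_{\wt\Gamma}$, which is the asserted conclusion.

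The hard part will be the first step under the relaxed hypotheses, and it splits into two genuine difficulties. The existence of the max-min extremal \eqref{M-M} over the infinite-dimensional class $\wt{\mc F}$ requires upper semicontinuity and compactness arguments for the equilibrium-energy functional $F\mapsto \mc E_\psi(\lambda_F)$ that are delicate when $\supp\omega$ is a continuum rather than a finite point set: the quadratic-differential description \eqref{QD}, which parametrizes $\wt\Gamma$ by a polynomial $V$ and makes the finite-support case tractable via the $p$ residue equations, degenerates into a functional (moment) condition on $\omega$, and the associated $S$-curve need no longer be algebraic. Controlling the topology of admissible contours and ruling out degenerations (pinching, escape of mass to $e$, or loss of connectedness) is where I expect the real obstruction; indeed the paper itself flags that ``it is not yet obvious that the conjecture below is valid without more restrictions on $\omega$,'' so this is precisely the gap.

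Finally, the \emph{uniqueness} clause demands a convexity argument: one shows $\mu\mapsto\mc E_\psi(\mu)$ is strictly convex (the interaction is a logarithmic kernel, positive definite on signed measures of total mass zero), so $\lambda_F$ is unique for each $F$, and then that the max-min value is attained at a unique $\wt\Gamma$ by comparing two extremals through a local-variation / symmetry argument showing that any two maximizers would have to coincide on the overlap of their supports and hence everywhere. My plan would be to attempt this under an extra regularity assumption on $\omega$ (for instance $\omega$ having a density bounded away from $0$ and $\infty$ on a union of analytic arcs, or $\supp\omega$ separated from $F$ and $e$ by a definite distance), prove the theorem in that regularity class by the route above, and isolate as the residual open problem exactly the general $\omega$ for which the existence of the $S$-extremal cannot yet be guaranteed.
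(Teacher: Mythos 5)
The statement you set out to prove is a \emph{conjecture}, and the paper deliberately gives no proof of it: the authors say explicitly that ``at this moment we do not have formal proof of the theorem generalizing Theorem~\ref{the2}'' and even that ``it is not yet obvious that the conjecture below is, indeed, valid without more restrictions on $\omega$.'' So there is no paper proof to compare yours against; what the paper actually proves is only the finite-support case \eqref{ome-1} (Theorem~\ref{the2}), by exactly the route you describe --- max-min energy (Proposition~\ref{pro12}) plus the $\GRS$-theorem (Theorem~\ref{the1}). Your proposal faithfully reproduces that intended strategy and, to your credit, flags the right obstructions; but as written it is a plan, not a proof, and the steps you defer are the entire content of the conjecture.

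Concretely, three gaps remain open in your argument, and closing any one of them would be a real advance. First, existence of the max-min extremal $\wt\Gamma$ in \eqref{M-M} for a non-atomic $\omega$: the actual mechanism behind Proposition~\ref{pro12} is the quadratic-differential parametrization \eqref{QD} together with the $p$ residue equations at the points $\beta_k$, and this machinery has no known replacement when $\supp\omega$ is a continuum; your fallback --- assume extra regularity on $\omega$ and prove a restricted statement --- proves a different theorem, not Conjecture~\ref{conj1}. Second, uniqueness: strict convexity of $\mu\mapsto\mc E_\psi(\mu)$ gives uniqueness of $\lambda_F$ for each \emph{fixed} $F$, but the outer maximization over $F\in\wt{\mc F}$ is not a convex problem in any variable, so convexity cannot rule out two distinct maximizing compacta; Proposition~\ref{pro12} itself claims only existence, and the uniqueness clause of the conjecture is genuinely additional. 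Third, your final step ``Theorem~\ref{the1} yields $\frac1n\chi(Q_n)\overset{*}{\to}\lambda_{\wt\Gamma}$'' overstates what the machinery delivers even in the settled case: the weight $\Psi_n=1/\Omega_n$ has poles accumulating on all of $\supp\omega$, the contour deformation onto $\wt\Gamma$ costs interpolation conditions each time it crosses a zero of $\Omega_n$ (Remark~\ref{rem3} controls this only when the number of crossings is $o(n)$, which is unclear for a continuum of interpolation points), and precisely for such reasons the paper's Theorem~\ref{the2} concludes only that the \emph{balayage} of $\frac1n\chi(Q_n)$ onto $\supp\lambda$ converges to $\lambda$ --- strictly weaker than the full weak-$*$ convergence asserted by the conjecture and by your last step. (A minor point: with the paper's convention $U^\mu(z)=-\int\log|z-x|\,d\mu(x)$ one has $-\frac1n\log|\Omega_n|\to +2U^\omega$, not $-U^{2\omega}$; harmless here, since the paper's own normalization of $\psi$ in \eqref{GRS1} versus $\psi=-U^\omega$ is loose, but worth fixing.)
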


The max-min compact set $\wt \Gamma$ has $S$-property but the definition of this property has to be slightly modified.

\section{Conjecture on Tschebyshev--Pad\'e approximants}

Let $\mc R$ be the Riemann surface of the function $\sqrt{z^2 -1}$ defined as a double-sheeted (branched) covering of $\myo\CC$ with quadratic branch points $\pm 1$. Let $\pi\colon \mc R \to \myo \CC$ be associated canonic projection. Lifting $\mc R \setminus \pi^{-1}(\Delta)$ of the  domain $\myo\CC\setminus\Delta = [-1, 1]$ onto $\mc R$ is union of two disjoint domains $\mc D^0$ and $\mc D^1$ both projected by $\pi$ one-to-one onto $ \myo \CC \setminus \Delta$.

Let $e = \{a_1, a_2, \dots, a_m\}$ be distinct point in $\CC \setminus \Delta.$ Together with points $a_j \in e$ we will have to consider their images $a'_j = \pi^{-1}(a_j) \in \mc D^1$.
Thus, set $e' = \{a'_1, a'_2, \dots, a'_m\}$ belongs to the first sheet $\mc D^1 \subset \mc R$.

Let function $\widehat f$ on $\mc R$ be holomorphic in $\mc D^0$ and has analytic continuation from $\mc D^0$ along any path on $\mc R$ which does not hit any point from $e'$ (at least two points from $e'$ are branch points for $\widehat f$.

This function has a continuous extension to the boundary of $\mc D^0$ which is the union of two copies of the interval $\Delta$ whose endpoints are identified. We select arbitrarily any of these intervals and denote $f(z) = \widehat f (\pi^{-1}(z))$. Function $f(z)$ is continuous on $\Delta$ an holomorphic on $(-1, 1)$.

Analytic continuation of $f$ from $(-1,1)$ to the upper half-plane may be extended to a function $f \in H(\myo\CC \setminus \Delta)$. Lifting of this extension is one of two function either $\widehat f|_{\mc D^0}$ or
$\widehat f|_{\mc D^1}$. We will agree that continuation of $f$ to upper half plane leads to the projection $f _0$ of the function $\widehat f|_{\mc D^0}$ (this makes the choice of the branch of $f$ unique).
Now, analytic continuation of $f$ to the lower half-plane may extended to a function $f_1(z)$ holomorphic in the complement to $e \cup \Delta$.

We will consider Tschebyshev--Pad\'e approximation to the function $f(z)$ with an analytic weight $w(x)$ on $\Delta$. The approximant of order $n$ is the ratio of two polynomials $P_n/Q_n$ of degree at most $n$ which are defined by
\begin{equation}
\int_{\Delta} x^k \,(Q_n f - P_n)(x) \, \frac {w(x) dx}{\sqrt{1-x^2}} = 0, \quad k = 0, 1, \dots, 2n-1.
\label{Ort2.1}
\end{equation}

Integration over $\Delta$ in this definition may be replaced with integration over a smooth curve $F_0$ connecting points $-1$ and $1$ in a domain of analyticity of $w$ which is homotopic to $\Delta$ in $\CC \setminus e$ (we assume that $w$ may be extended far enough; exact conditions will be formulated below). Set of such curves will be denoted by $\mc F_0$

Next, we have $f_1 \in \mc A(\myo\CC \setminus (e \cup \Delta))$. Let $\mc F_1$ be the set of admissible cuts for $f_1$, that is, for any $F_1 \in \mc F_1$ we have $f_1 \in H(\myo\CC \setminus F_1)$. We assume also that $F_1 \in \mc F_1$ is a finite union of smooth arcs.

We will consider only pairs $\vec {\mc F} =\{ (F_0, F_1)\}$ of disjoint compacts $F_k \in \mc F_k$ ($k =0, 1$). The description of behaviour of approximants $P_n/Q_n$ defined by \eqref{Ort2.1} is given in terms of a vector $S$-compact set $(\Gamma_0, \Gamma_1) \in \vec {\mc F}$ which may be defined as follows.

Let $\mc M(F_0, F_1)$ be the set of positive vector measures $\vec \mu = (\mu_0, \mu_1)$ on $(F_0, F_1)$ respectively with masses $\mu_0(F_0) = 2$ and $\mu_0(F_1) = 1$. The energy $\mc E$ of the vector-measure $(\mu_0, \mu_1)$ is defined by
\begin{equation}\label{E1}
\mc E(\mu_0, \mu_1) = \int U^{\mu_0}d\mu_0 - \int U^{\mu_0}d\mu_1 + \int U^{\mu_1}d\mu_1.
\end{equation}
Equivalently, this energy is defined by $2\times 2$ positive definite matrix
$M = \begin{pmatrix} 1 & -1/2 \\ -1/2 & 1 \end{pmatrix}$

For any $(F_0, F_1)$ there is a pair of measures $(\lambda_0, \lambda_1) \in\mc M(F_0, F_1)$ on $F_0$ and $F_1$ respectively which solve the matrix equilibrium problem with masses $\lambda_0(F_0) = 2$ and $\lambda_0(F_1) = 1$ and vector equilibrium measure $(\lambda_0, \lambda_1)$ minimizes vector-energy
\begin{equation}\label{E2}
\mc E(\lambda_0, \lambda_1) = \min_{\mc M} \mc E(\mu_0, \mu_1),
\end{equation}
where $\mc M = \mc M(F_0, F_1)$.

\begin{proposition}\label{pro14}
There is a pair of compact sets $\Gamma_0 \in \mc F_0$ and $\Gamma_1 \in \mc F_1$ which maximizes equilibrium energy $\mc E(\lambda_0, \lambda_1)$.
\end{proposition}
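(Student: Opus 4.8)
The plan is to prove this by the max-min energy method, following the same variational scheme as in Proposition~\ref{pro12} but now for the coupled two-component energy \eqref{E1}. I would separate the argument into an inner minimization (over measures, for a fixed pair of compacts) and an outer maximization (over the admissible pairs). For the inner problem, fix any disjoint pair $(F_0, F_1) \in \vec{\mc F}$. The quadratic form \eqref{E1} is the one attached to the interaction matrix $M$, whose eigenvalues $3/2$ and $1/2$ are strictly positive; since the logarithmic kernel is positive definite on signed measures of total mass zero, positive definiteness of $M$ makes $\mc E$ strictly convex, while on a fixed disjoint pair the cross kernel $\log|t-x|$ is bounded on $F_0 \times F_1$, so $\mc E$ is weak-$*$ lower semicontinuous on the convex, weak-$*$ compact set $\mc M(F_0, F_1)$ of vector measures with the prescribed masses $\mu_0(F_0) = 2$, $\mu_1(F_1) = 1$. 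Hence the constrained minimizer $(\lambda_0, \lambda_1) = (\lambda_0^F, \lambda_1^F)$ exists, is unique, and is characterized by the usual Euler--Lagrange (equilibrium) conditions (cf.~\cite{NiSo88}). This makes the outer functional $(F_0, F_1) \mapsto \mc E(F_0, F_1) := \mc E(\lambda_0^F, \lambda_1^F)$ well defined on $\vec{\mc F}$ and reduces the proposition to showing that it attains its supremum \eqref{M-M}.

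For the outer problem I would run the direct method: pick a sequence $(F_0^{(n)}, F_1^{(n)}) \in \vec{\mc F}$ with $\mc E(F_0^{(n)}, F_1^{(n)}) \to \sup_{\vec{\mc F}} \mc E$. Since compact subsets of $\myo\CC$ form a compact metric space under the Hausdorff metric, the Blaschke selection theorem gives a subsequence converging to a pair of continua $(\Gamma_0, \Gamma_1)$, and the corresponding equilibrium measures converge weak-$*$ to a vector measure $(\nu_0, \nu_1)$ with the correct masses. Two structural features of \eqref{E1} are then used: the cross term $-\int U^{\mu_0}\,d\mu_1$ rewards separation of the two supports (its value decreases when the supports approach one another), so a maximizing sequence stays uniformly separated, disjointness is preserved in the limit, and the interaction energy passes continuously to the limit along the bounded kernel on $\Gamma_0 \times \Gamma_1$; the diagonal self-energies are controlled by lower semicontinuity of the logarithmic energy. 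I would also verify that $\Gamma_0$ still joins $-1$ and $1$ and that both limits remain in their homotopy classes in $\CC \setminus e$, so that $(\Gamma_0, \Gamma_1) \in \vec{\mc F}$.

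The hard part --- and the genuine obstacle, exactly as in every Stahl/GRS-type existence theorem --- is the attainment of the maximum, i.e. establishing the correct (upper) semicontinuity of the min-energy functional $\mc E(F_0, F_1)$ at the limiting pair. The naive weak-$*$ argument via the principle of descent yields semicontinuity in the wrong direction for a maximization, so one cannot conclude by soft compactness alone; the maximizing character of the limit must be forced by the geometry of the admissible compacts. Following~\cite{PeRa94},~\cite{MaRaSu11},~\cite{RaSu13}, the mechanism is to show that any degeneration of the maximizing sequence (a component collapsing to a set of zero capacity, pinching so that $\Gamma_0$ fails to connect $\pm 1$, escaping its homotopy class by running into a point of $e$, or the two components merging) strictly decreases the equilibrium energy and is therefore incompatible with maximality. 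In practice this is organized by reducing the candidate maximizers to finite unions of critical trajectories of the associated quadratic differential \eqref{QD}, which turns the outer maximization into a proper extremal problem in finitely many geometric parameters (the Chebotarev-type junction points), whose solvability yields the maximizing pair $(\Gamma_0, \Gamma_1)$. Once existence is secured, the $S$-property of each component follows in the standard way by local variations of the arcs, for which the vanishing of the first variation of $\mc E$ at the maximizer produces symmetry conditions of the type \eqref{GRS2}.
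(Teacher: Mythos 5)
Your proposal does not follow the paper: the paper gives no variational proof in the plane at all, but states that the idea of the proof of Proposition~\ref{pro14} comes from~\cite{Sue19} and consists in reformulating the vector problem \eqref{E1}--\eqref{E2} as a scalar weighted equilibrium problem on the double-sheeted Riemann surface $\mc R$, where the potential theory of compact Riemann surfaces (\cite{Chi18},~\cite{Chi20}) applies. Your route is the planar max-min scheme of~\cite{PeRa94},~\cite{MaRaSu11},~\cite{RaSu13} instead. Your inner step is correct and standard: for a fixed disjoint pair $(F_0,F_1)$ the matrix $M$ has eigenvalues $3/2$ and $1/2$, the cross kernel is continuous on $F_0\times F_1$, and existence and uniqueness of $(\lambda_0^F,\lambda_1^F)$ follow from convexity and weak-$*$ compactness. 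The divergence, and the problem, is in the outer step.

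There the proposal has a genuine gap. You correctly identify that the principle of descent gives semicontinuity in the wrong direction, so Blaschke selection plus weak-$*$ limits cannot by themselves show that the Hausdorff limit of a maximizing sequence realizes the supremum \eqref{M-M}; but the remedy you then invoke is a statement of intent, not an argument. The degeneration analysis and local variations of~\cite{PeRa94},~\cite{MaRaSu11},~\cite{RaSu13} are developed for a \emph{single} compact set in a \emph{fixed} external field (or for minimal capacity); here the field acting on each component is (a multiple of) the potential of the other component's equilibrium measure, so the field itself is an unknown that moves with the pair $(F_0,F_1)$, and none of the cited variational estimates apply as stated to this coupled problem. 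That coupling is exactly why existence is not a known corollary of the planar theory and why the paper lifts the problem to $\mc R$, where it becomes scalar. Two further inaccuracies: the quadratic differential \eqref{QD} you appeal to belongs to the finite-interpolation problem of Proposition~\ref{pro12} --- its factor $B(z)=\prod_k(z-\beta_k)$ is built from the interpolation points $\beta_k$ --- so it is not the differential governing the present two-compact problem; and your heuristic that every degeneration ``strictly decreases the equilibrium energy'' fails for the collapse mode, since a component shrinking to small capacity makes the equilibrium energy blow up to $+\infty$, and such collapse is excluded by the structure of the classes $\mc F_0$, $\mc F_1$ (each admissible compact must contain continua joining fixed branch points), not by an energy comparison. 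As it stands, the proposal reproduces the easy half of the argument and leaves the decisive attainment step unproven.
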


Denote
$(\lambda^\Gamma_0, \lambda^\Gamma_1)$ components of the equilibrium measure, $\Gamma=(\Gamma_0,\Gamma_1)$.

The idea of the proof of the Proposition~\ref{pro14} was proposed in~\cite{Sue19}; it is based on the equivalent reformulation of the equilibrium problem \eqref{E1}--\eqref{E2} in terms of weighted potential on the Riemann surface $\mc R$ (see~\cite{Chi18},~\cite{Chi20}). Descriptions of the equilibrium vector-compact $(\Gamma_0, \Gamma_1)$ may be given using different terms.

\begin{conjecture}\label{conj2}
Let $P_n/Q_n$ be $n$-th order Tschebyshev--Pad\'e approximant to the function $f_0(z)$. Then we have, first, $\frac 1n \chi(Q_n) \overset {*}{\to} \lambda_1^\Gamma$ as $n \to \infty$. Second, function $R_n = Q_n f - P_n$ has $2n(1+ o(1))$ zeros around $\Gamma_0$ and the normalized counting measure $\sigma_n$ of these zeros weakly converges to $\lambda^\Gamma_0$.
\end{conjecture}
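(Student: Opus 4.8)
The plan is to reduce Conjecture~\ref{conj2} to a vector version of the $\GRS$-theorem, mirroring the scheme that yields Proposition~\ref{pro2} in the two-Markov-function case. The first step is to convert the defining relations~\eqref{Ort2.1} into non-hermitian orthogonality relations for $Q_n$ alone. As in the proof of Proposition~\ref{pro11}, I would deform the contour $\Delta$ out to the compact $\Gamma_0 \in \mc F_0$ and use analyticity of $w$ together with the two single-valued continuations $f_0$ and $f_1$ of $f$. The key structural fact is that the continuation $f_1$ carries the extra branch points $e$, so pushing the contour past $\Delta$ picks up contributions along the cut $\Gamma_1 \in \mc F_1$ of $f_1$. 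This is exactly the Nikishin mechanism behind Proposition~\ref{pro1}: the remainder $R_n = Q_n f - P_n$ satisfies orthogonality on $\Gamma_0$ against $\PP_{2n-1}$, while $Q_n$ itself inherits a second, coupled non-hermitian orthogonality on $\Gamma_1$, the two relations being linked through the off-diagonal entry $-1/2$ of the interaction matrix $M$.

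With the coupled orthogonality in hand, I would invoke Proposition~\ref{pro14} to fix the equilibrium vector-compact $(\Gamma_0, \Gamma_1)$ and its components $(\lambda_0^\Gamma, \lambda_1^\Gamma)$; on the Riemann surface $\mc R$ this is a single weighted $S$-compact, which is the geometric object the asymptotics require. Fixing the two integration contours on $\Gamma_0$ and $\Gamma_1$, the orthogonality for $Q_n$ takes the form~\eqref{Ort11}, except that the external field acting on $\Gamma_1$ is not prescribed in advance: it is generated self-consistently by the companion measure on $\Gamma_0$ (the limit distribution of the zeros of $R_n$), and symmetrically for $\Gamma_0$. The heart of the argument is a vector analogue of Theorem~\ref{the1} asserting that this coupled system forces $\frac 1n \chi(Q_n) \to \lambda_1^\Gamma$. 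I would attempt this by the standard $\GRS$ mechanism, namely an energy lower bound from the $n$-th root asymptotics of the leading coefficient matched against an upper bound supplied by the $S$-property~\eqref{GRS2} on $\Gamma_1$, but carried out for the two components simultaneously, since neither field can be frozen while the other converges.

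Once the zero distribution of $Q_n$ is established, the second assertion follows along classical lines. The remainder $R_n$ plays the role of $\Omega_n$ in Proposition~\ref{pro2}: its $2n(1+o(1))$ zeros accumulate on $\Gamma_0$, and the companion equilibrium condition pins their counting measure to $\lambda_0^\Gamma$. Concretely, I would represent $R_n$ as a Cauchy-type transform of $Q_n$ against the jump of $f$ across $\Gamma_1$, insert the already-established weak limit $\frac 1n\chi(Q_n)\to\lambda_1^\Gamma$ to obtain the logarithmic asymptotics of $R_n$ off $\Gamma_0$, and then locate the zeros by the argument principle together with the variational characterization of $(\lambda_0^\Gamma, \lambda_1^\Gamma)$. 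The loss of at most $o(n)$ interpolation conditions (cf.~Remark~\ref{rem3}) does not affect the limit measures.

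The main obstacle is precisely the difficulty anticipated earlier in the paper: passing from a single non-hermitian orthogonality relation~\eqref{Ort1} to a \emph{combination} of two such systems. Theorem~\ref{the1} governs one relation with a \emph{fixed} external field, whereas here the field on each of $\Gamma_0, \Gamma_1$ is produced self-consistently by the partner measure, so the two limits are genuinely entangled and cannot be decoupled. Proving the matched upper and lower energy bounds in this coupled regime, in effect a vector $\GRS$-theorem adapted to the matrix $M$, is the new ingredient that the present technique does not yet supply, which is why the statement is recorded as a conjecture rather than a theorem.
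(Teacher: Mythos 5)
You are addressing a statement that the paper itself records as a \emph{conjecture}: the paper supplies no proof, only a conditional plan, so your proposal must be judged as a competing plan. The two plans are genuinely different. You propose to turn \eqref{Ort2.1} into a coupled pair of non-hermitian orthogonality relations on $(\Gamma_0,\Gamma_1)$ and then to prove a \emph{vector} analogue of Theorem~\ref{the1} in which the two external fields are generated self-consistently and must be handled simultaneously; you correctly flag that such a vector $\GRS$ theorem is the missing ingredient. The paper's plan never formulates a vector $\GRS$ statement. Instead it reinterprets the Tschebyshev--Pad\'e pair $(P_n,Q_n)$ as a \emph{multipoint Pad\'e approximant} to the single branch $f_1$ in $D_1=\myo\CC\setminus\Gamma_1$, the interpolation table $\Omega_n$ being the (unknown, a posteriori) zeros of $R_n=Q_nf-P_n$ in $D_1$. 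Weak-$*$ compactness supplies a subsequence along which simultaneously $\frac1n\chi(\Omega_n)\overset{*}{\to}2\omega$ and $\frac1n\chi(Q_n)\overset{*}{\to}\eta$; along that subsequence the field $-U^\omega$ is \emph{fixed}, so the scalar Conjecture~\ref{conj1} applies and yields that $\eta$ is the equilibrium measure on $\Gamma_1$ in the field $-U^\omega$, together with the asymptotics \eqref{R} for $R_n$. Feeding \eqref{R} back into \eqref{Ort2.1} shows that $\omega$ is the equilibrium measure on $\Gamma_0$ in the field $-\frac12U^\eta$, and uniqueness of the solution of the vector equilibrium problem \eqref{E1}--\eqref{E2} on the max-min pair of Proposition~\ref{pro14} then forces $(\omega,\eta)=(\lambda^\Gamma_0,\lambda^\Gamma_1)$, independently of the subsequence; both assertions of the conjecture drop out at once, rather than the second being derived from the first as in your scheme.

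This comparison isolates the one concrete misstep in your write-up: the claim that ``neither field can be frozen while the other converges.'' It can be frozen --- extracting a simultaneously convergent subsequence freezes both limit fields at once, and the coupling is then resolved not by a simultaneous asymptotic analysis but by the uniqueness of the coupled equilibrium problem. That is precisely how the paper decouples the two orthogonality systems, at the price of needing Conjecture~\ref{conj1} in full generality (arbitrary limiting density $\omega$ of the table), which is equally open; so both plans are conditional, yours on an unproven vector $\GRS$ theorem, the paper's on the unproven scalar Conjecture~\ref{conj1}, but the paper's recycles scalar machinery plus the already-stated max-min existence result, which is why it is the route the authors advocate. A secondary weakness: your opening step, deforming $\Delta$ in \eqref{Ort2.1} and asserting that one ``picks up contributions along the cut $\Gamma_1$,'' is stated too loosely for a multivalued $f_1$ with branch points at $e$. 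In the Markov--Nikishin setting of Propositions~\ref{pro1} and~\ref{pro2} the second orthogonality system comes from the jump of $\widehat\sigma$ across a fixed real segment; for general $f\in\mc A$ there is no canonical jump representation until one already knows that $\Gamma_1$ is the correct cut, and avoiding this circularity is exactly what the paper's interpolation formulation is for. Finally, note that both you and the paper assume, rather than prove, that $R_n$ has $2n(1+o(1))$ zeros in $D_1$; the paper states this assumption explicitly, and your plan inherits it silently.
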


A proof of the conjecture may be based on Conjecture~\ref{conj1}. We outline a plan of such proof assuming that Conjecture~\ref{conj1} is proved in complete generality.

We may need Conjecture~\ref{conj1} in maximal generality since we do not have an {\it a'priory} information on location of zeros of the function $R_n = Q_n f - P_n$ but we know, at least, that we have to consider branch $f = f_1$ of this function in $D_1 = \myo \CC \setminus \Gamma_1$. Assume that this function has $2n(1+ o(1))$ zeros in $D_1$. We denote by $\Omega_n$ polynomial with zeros at these points which belong to $D_1$. At this moment we will ignore zeros of $R_n$ on $\Gamma_1$.

If there are too many of them on $\Gamma_1$ we may have to pass to a slightly different branch of $f$ but now we do not go into such details.

Then we select a weakly convergent subsequences: we find a subsequence $\Lambda \subset \NN$ such that as $n \to \infty, \ n\in \Lambda$, \eqref{Lim} is valid and at the same time $\frac 1n \nu(Q_n) \overset {*}{\to} \eta$. Now we have to prove that $(\omega, \eta) = (\lambda^\Gamma_0, \lambda^\Gamma_1)$.

We consider interpolation problem for the branch of $f_1$ in $\myo\CC \setminus \Gamma_1 $ and derive from Conjecture~\ref{conj1} that $\eta$ is the equilibrium measure on $\Gamma_1$ in the external field
$-U^\omega$.

It follows also that after a proper normalization we have convergence in capacity
\begin{equation}\label{R}
\frac 1n \log|R_n(z)| \to U^\eta(z) - U^\omega(z).
\end{equation}
Then orthogonality conditions \eqref{Ort2.1} imply that $\omega$ is the equilibrium measure on $\Gamma_0$ in the external field $-{}^1\!/_2 U^\eta$. This would conclude the proof.

\section{One numerical example}\label{s7}

In this Section we discuss a numerical example on the distribution of the zeros of Hermite--Pad\'e polynomials and poles and zeros of Tschebyshev--Pad\'e approximations. All the numerical computations were performed using the Program HePa.com~\cite{IkSuHePa21}. This Program is based on a generalization of the classical Viskovatov algorithm~\cite{IkSu21}.

In this section we denote Pad\'e polynomials of degree $\leq{n}$ by $P_{n,0}$ and $P_{n,1}$, i.e., for a given $\mathfrak f\in H(\infty)$
$$
(P_{n,0}+P_{n,1}\mathfrak f)(z)=O\(\frac1{z^{n+1}}\),\quad z\to\infty.
$$
Denominator and numerator of Tschebyshev--Pad\'e approximation $\Phi_n(z)$ of order $n$ for a series $\mathfrak h$ of type~\eqref{OS} is denoted here by $Q_n$ and $P_n$ respectively, i.e. $\Phi_n(z)=P_n(z)/Q_n(z)$ and
$$
(Q_n\mathfrak h-P_n)(z)=\sum_{k=2n+1}^\infty c_{n,k}T_k(z;\mu).
$$
As before, type I Hermite--Pad\'e polynomials for multiindex $(n,n,n)$ and for a tuple of functions $[1,\mathfrak f_1,\mathfrak f_2]$, $\mathfrak f_1,\mathfrak f_2\in H(\infty)$, is denoted by $Q_{n,0},Q_{n,1}$ and $Q_{n,2}$, i.e.
$$
(Q_{n.0}+Q_{n,1}\mathfrak f_1+Q_{n,2}\mathfrak f_2)(z)=
O\(\frac1{z^{2n+2}}\),\quad z\to\infty.
$$

Let function $f$ be from the class given by the explicit representation~\eqref{L}. Here we set $m=3$, $\alpha_1=\alpha_2=-1/3$, $\alpha_3=2/3$ and $A_3\in(1,2)$, $A_{1,2}=e\pm ib$, where $e>0$ is a small real number. Thus
\begin{equation}
 f(z)=\biggl(A_1-\frac1{\varphi(z)}\biggr)^{-1/3}
\biggl(A_2-\frac1{\varphi(z)}\biggr)^{-1/3}
\biggl(A_3-\frac1{\varphi(z)}\biggr)^{2/3}
\label{Z2}
\end{equation}
(here $\varphi(z)=z+(z^2-1)^{1/2}\sim 2z$ as $z\to\infty$).
Let function $f$ be given by~\eqref{Z2}. Let consider two tuples $[1, f,f^2]$ and $[1,1/(z^2-1)^{1/2},f]$. Clearly that the pair $(-Q_{n,1},Q_{n,2})$ from  the second tuple gives a Tschebyshev--Pad\'e approximation to the function $\mathfrak h$, i.e. $\Phi_n(z)=-Q_{n,1}(z)/Q_{n,2}(z)$. Thus zeros of $Q_{n,1}$ and $Q_{n,2}$ are respectively zeros and poles of the Tschebyshev--Pad\'e approximation $\Phi_n$.

Let now take into account all that was said before about the connection between type I Hermite--Pad\'e polynomials and Tschebyshev--Pad\'e approximations and about max-min problem which was conjectured to describe the limit zeros distribution of Hermite--Pad\'e polynomials. Then based on that information we might suppose that the limit zero distribution of the corresponding HP-polynomials for both tuples
$[1, f,f^2]$ and $[1,1/(z^2-1)^{1/2},f]$, where $f$ is from~\eqref{Z2},
 should be just the same. The numerical results represented on the Figure~\ref{Fig_hp} and Figure~\ref{Fig_che} are in a good accordance with that statement.

\newpage\clearpage
\begin{figure}[!ht]
\centerline{
\includegraphics[width=15cm,height=15cm]{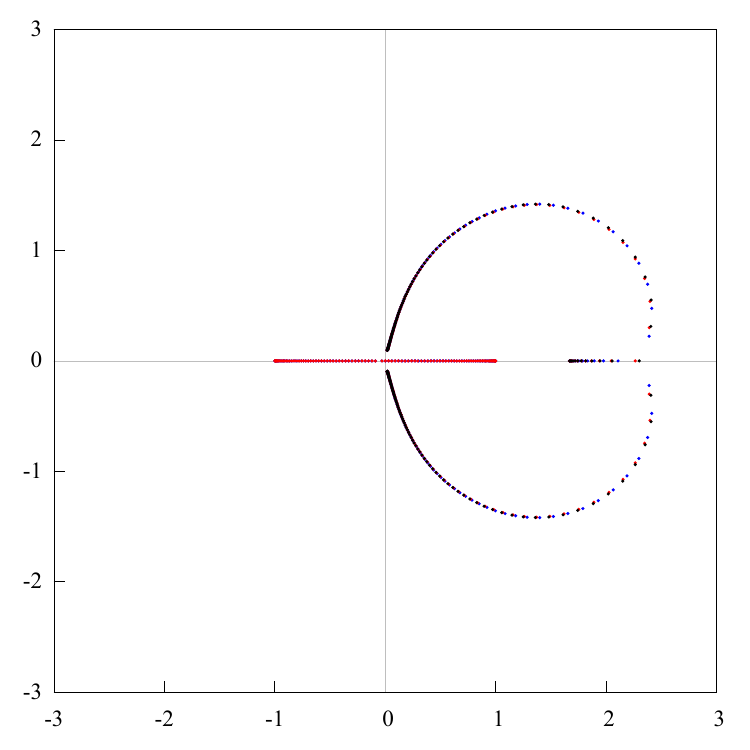}}
\vskip-6mm
\caption{
Here the zeros of Pad\'e polynomials $P_{n,0}(z),P_{n,1}(z)$ of order
$n=100$ for the function $f$ given by~\eqref{Z2}  are plotted  (blue points for $P_{n,0}$ and red points for $P_{n,1}$). These zeros simulate the segment $\Delta=[-1,1]$ which is the Stahl compact set for $f$. Also the zeros of type I HP-polynomials $Q_{n,0}(z), Q_{n,1}(z)$ and $Q_{n,2}(z)$ of order
$n=200$ for the tuple $[1,f,f^2]$ are plotted (blue points for $Q_{n,0}$, red points for $Q_{n,1}$ and black points for $Q_{n,2}$). Those zeros simulate the $S$-compact set $\myt{\Gamma}$ from Proposition~\ref{pro12} which is symmetric with respect to the real line (cf. Fig.~\ref{Fig_che}).	
}
\label{Fig_hp}
\end{figure}
\newpage\clearpage
\begin{figure}[!ht]
\centerline{
\includegraphics[width=15cm,height=15cm]{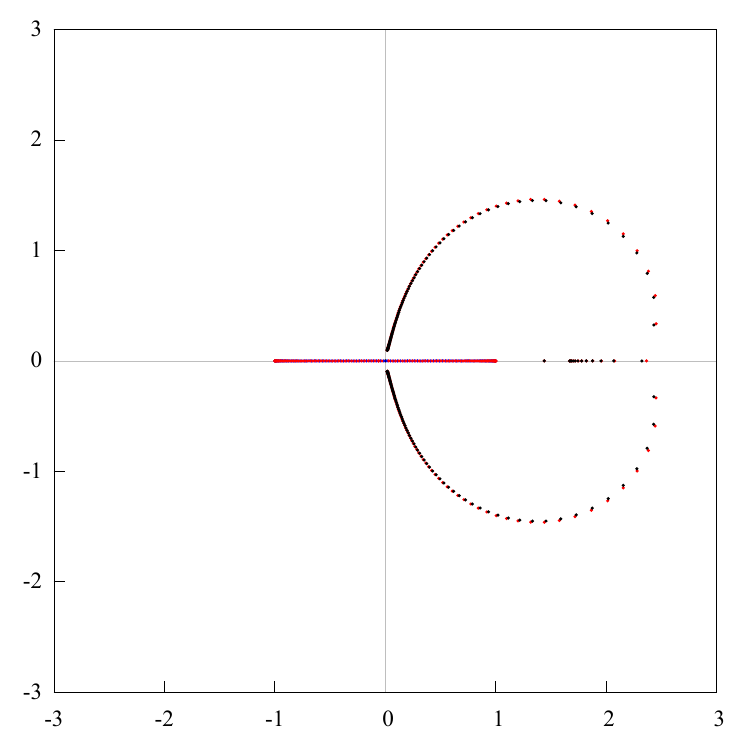}}
\vskip-6mm
\caption{
Here the zeros of Pad\'e polynomials $P_{n,0}(z),P_{n,1}(z)$ of order
$n=100$ for the function $1/(z^2-1)^{1/2}$  are plotted  (blue points for $P_{n,0}$ and red points for $P_{n,1}$). These zeros simulate the segment $\Delta=[-1,1]$ which is the Stahl compact set for the function  $1/(z^2-1)^{1/2}$. Also the zeros of type I HP-polynomials $Q_{n,1}(z)$ and $Q_{n,2}(z)$ of order $n=200$ for the tuple $[1, 1/(z^2-1)^{1/2},f]$ are plotted (red points for $Q_{n,1}$ and black points for $Q_{n,2}$). Those zeros simulate the $S$-compact set $\myt{\Gamma}$ from Proposition~\ref{pro12} which is symmetric with respect to the real line  (cf. Fig.~\ref{Fig_hp})
}
\label{Fig_che}
\end{figure}


\newpage\clearpage

\def\by#1;{#1\unskip,}
\def\paper#1;{``#1\unskip''\unskip,}
\def\paperinfo#1;{#1\unskip.}
\def\eprint#1;{``#1\unskip''\unskip,}
\def\eprintinfo#1;{#1\unskip,}
\def\book#1;{``#1\unskip''\unskip,}
\def\inbook#1;{``#1\unskip''\unskip,}
\def\bookinfo#1;{#1\unskip,}
\def\jour#1;{#1\unskip,}
\def\issue#1;{#1\unskip,}
\def\yr#1;{#1\unskip,}
\def\pages#1.{#1\unskip.}
\def\vol#1;{\textbf{#1}\unskip,}
\def\finalinfo#1;{#1\unskip.}
\def\publ#1;{#1\unskip,}
\def\publadrr#1;{#1\unskip,}
\def\procinfo#1;{#1\unskip,}
\def\serial#1;{#1\unskip,}
\def\ed#1;{ed #1\unskip,}
\def\eds#1;{ed #1\unskip,}


\end{document}